\newcommand{\R}{\mathbb R}
\newcommand{\N}{\mathbb N}
\newcommand{\sG}{\mathscr G}
\newcommand{\sB}{\mathscr B}
\newcommand{\be}{{\mathbf e}}
\newcommand{\ben}{{\be^*_n}}
\newcommand {\SN} {{\mathbb N}}
\newcommand {\SX} {{\mathbb X}}
\newcommand {\Ga} {{\Gamma}}
\numberwithin{equation}{section}
\newtheorem{theorem}{Theorem}[section]
\newtheorem{lemma}[theorem]{Lemma}
\newtheorem{defi}[theorem]{Definition}
\newtheorem{corollary}[theorem]{Corollary}
\newtheorem{Remark}[theorem]{Remark}
\newtheorem{example}[theorem]{Example}
\newcommand{\Ba}[1]{\begin{array}{#1}}
\newcommand{\Ea}{\end{array}}
\newcommand{\Bd}{\begin{description}}
\newcommand{\Ed}{\end{description}}
\newcommand{\Be}{\begin{equation}}
\newcommand{\Ee}{\end{equation}}
\newcommand{\Bea}{\begin{eqnarray}}
\newcommand{\Eea}{\end{eqnarray}}
\newcommand{\Beas}{\begin{eqnarray*}}
\newcommand{\Eeas}{\end{eqnarray*}}
\newcommand{\Benu}{\begin{enumerate}}
\newcommand{\Eenu}{\end{enumerate}}
\newcommand{\Bi}{\begin{itemize}}
\newcommand{\Ei}{\end{itemize}}
\newcommand{\BR}{\begin{Remark} \em}
\newcommand{\ER}{\end{Remark}}
\newcommand{\BE}{\begin{example} \em}
\newcommand{\EE}{\end{example}}
\newcounter{reg}
\begin{document}

\title[\tiny Approximation with respect to polynomials with constant coefficients]{The best m-term approximation with respect to polynomials with constant coefficients}%
\author{Pablo M. Bern\'a}
\address{Pablo M. Bern\'a
\\
Instituto Universitario de Matem\'atica Pura y Aplicada
\\
Universitat Polit\`ecnica de Val\`encia
\\
46022 Valencia, Spain} \email{pmbl1991@gmail.com}
\author{\'Oscar Blasco}
\address{\'Oscar Blasco
\\
Departamento de An\'alisis Matem\'atico
\\
Universidad de Valencia, Campus de Burjassot
\\
46100 Valencia, Spain} \email{oscar.blasco@uv.es}

\thanks{The first author is partially supported by GVA PROMETEOII/2013/013 and 19368/PI/14 (\textit{Fundaci\'on S\'eneca}, Regi\'on de Murcia, Spain).  The second author is partially supported by  MTM2014-53009-P (MINECO, Spain).}
\subjclass{41A65, 41A46, 46B15.}

\keywords{thresholding greedy algorithm; m-term approximation;
weight-greedy basis. }

\begin{abstract}
In this paper we show that that greedy bases can be defined as those where the error term using $m$-greedy approximant is uniformly bounded by the best $m$-term approximation  with respect to polynomials with constant coefficients in the context of the weak greedy algorithm and weights.
\end{abstract}

\maketitle

\section{Introduction }
Let $(\SX,\Vert \cdot \Vert)$ be an infinite-dimensional real Banach space and let $\sB = (e_n)_{n=1}^\infty$ be a normalized Schauder basis of $\SX$ with biorthogonal functionals $(e_n^*)_{n=1}^\infty$. Throughout the paper, for each finite set $A\subset \SN$ we write $|A|$ for the cardinal of the set $A$, $1_A=\sum_{j\in A} e_j$ and $P_A(x)=\sum_{n\in A}e_n^*(x) e_n$. Given a collection of signs $(\eta_j)_{j\in A}\in\lbrace\pm 1\rbrace$ with $|A|<\infty$, we write $1_{\eta A} = \sum_{n\in A}\eta_j e_j\in \SX$ and we use the notation $[1_{\eta A}]$ and $[e_n, n\in A]$ for the one-dimensional subspace  and the $|A|-$dimensional subspace generated by generated by $1_{\eta A}$ and by $\lbrace e_n, n\in A\rbrace$ respectively. For each $x\in\SX$ and $m\in \SN$, S.V. Konyagin and V.N. Temlyakov defined in \cite{VT} the \textbf{$m$-th greedy approximant} of $x$ by
$$\mathcal{G}_m(x) = \sum_{j=1}^m e_{\rho(j)}^*(x)e_{\rho(j)},$$
where $\rho$ is a greedy ordering, that is $\rho : \SN \longrightarrow \SN$ is a permutation such that $supp(x) = \lbrace n: e_n^*(x)\neq 0\rbrace \subseteq \rho(\SN)$ and $\vert e_{\rho(j)}^*(x)\vert \geq \vert e_{\rho(i)}^*(x)\vert$ for $j\leq i$. The collection $(\mathcal{G}_m)_{m=1}^\infty$ is called the \textbf{Thresholding Greedy Algorithm} (TGA).

This algorithm is usually a good candidate to  obtain  the \textbf{best m-term approximation}  with regard to $\sB$, defined by
$$ \sigma_m(x,\sB)_\SX =\sigma_m(x) := \inf\lbrace d(x,[e_n, n\in A]) : A\subset \SN, \vert A\vert = m\rbrace.$$

The bases satisfying
\begin{equation} \label{old}\Vert x-\mathcal{G}_m(x)\Vert \leq C\sigma_m(x),\;\; \forall x\in\SX, \forall m\in\SN,\end{equation}
where $C$ is an absolute constant are called \textbf{greedy bases} (see \cite{VT}).

The first characterization of greedy bases was given by S.V. Konyagin and V. N. Temlyakov in \cite{VT} who established that a basis is greedy if and only if it is unconditional and democratic (where a basis is said to be democratic if there exists $C>0$ so that $\|1_A\|\le C \|1_B\|$ for any pair of finite sets $A$ and $B$ with $|A|=|B|$). 

Let us also recall two possible extensions of the greedy algorithm and the greedy basis. The first one consists in taking  the $m$ terms with near-biggest coefficients and  generating the Weak Greedy Algorithm (WGA) introduced by V.N. Temlyakov in \cite{T}.
 For each $t\in (0,1]$,
 a finite set $\Ga\subset\SN$ is called a $t$-greedy set for $x\in\SX$, for short $\Ga\in\sG(x,t)$, if
\[
\min_{n\in \Ga}|\ben(x)|\,\geq\,t\max_{n\notin\Ga}|\ben(x)|,
\]
and write $\Ga\in\sG(x,t,N)$ if in addition $|\Ga|=N$. A \textbf{$t$-greedy operator of order $N$} is a mapping  $G^t:\SX\to\SX$ such that
\[
G^t(x)=\sum_{n\in \Ga_x}\ben(x)\be_n, \quad \mbox{for some }\Ga_x\in \sG(x,t,N).
\]
   A basis is called \textbf{$t$-greedy} if there exists $C(t)>0$ such that
\begin{eqnarray}
\Vert x-G^t(x)\Vert \leq C(t)\sigma_m(x)\; \forall x\in\SX, \forall m\in\SN, \forall G^t\in \sG(x,t,m).
\end{eqnarray}

It was shown that a basis is $t$-greedy for some $0<t\le 1$ if and only if it is $t$-greedy for all $0<t\le 1$. From the proof it  follows that
greedy basis are also $t$-greedy basis with constant $C(t)= O(1/t)$ as $t\to 0$.

The second one consists in replacing $|A|$ by $w(A)=\sum_{n\in A} w_n$ and it  was considered by G. Kerkyacharian, D. Picard and V.N. Temlyakov in \cite{KPT} (see also \cite[Definition 16]{Tem}). Given a weight sequence $\omega = \lbrace \omega_n\rbrace_{n=1}^\infty, \omega_n >0$ and a positive real number $\delta>0$, they defined
$$\sigma_\delta ^\omega (x) = \inf \lbrace d(x,[e_n, n\in A]) : A\subset \SN, \omega(A)\leq \delta\rbrace$$
where $\omega(A) := \sum_{n\in A}\omega_n$, with $A\subset\mathbb{N}$.
 They called  \textbf{weight-greedy bases} ($\omega$- greedy bases) to those bases satisfying
\begin{eqnarray}\label{wt}
\Vert x-\mathcal{G}_m(x)\Vert \leq C \sigma_{\omega(A_m)}^\omega (x),\; \forall x\in\SX, \forall m\in\SN,
\end{eqnarray}
where $C>0$ is an absolute constant and $A_m = supp(\mathcal{G}_m(x))$. Moreover, they proved in \cite{KPT} that $\sB$ is a $\omega$- greedy basis if and only if it is unconditional and $w$-democratic (where a basis is $w$-democratic whenever there exists $C>0$ so that $\|1_A\|\le C \|1_B\|$ for any pair of finite sets $A$ and $B$ with $w(A)\le w(B)$). 
This generalization was motivated by the work of A. Cohen, R.A. DeVore and R. Hochmuth in \cite{CDH} where the basis was indexed by dyadic intervals and  $w_\alpha (\Lambda)=\sum_{I\in \Lambda}|I|^\alpha$. Later in 2013, similar considerations were considered by  E. Hern\'andez and D. Vera  to prove some inclusions of approximation spaces (see \cite{HV}).

Let us summarize and use the following combined definition.
\begin{defi} Let $\sB$ be a normalized Schauder basis in $\mathbb{X}$, $0<t\le 1$ and weight sequence $\omega = \lbrace \omega_n\rbrace_{n=1}^\infty$ with $\omega_n >0$.
We say that $\sB$ is \textbf{$(t,\omega)$-greedy} if there exists $C(t)>0$ such that
\begin{equation}\label{g}
\Vert x-G^t(x)\Vert \leq C(t)\sigma^w_{m(t)}(x)\; \forall x\in\SX, \forall m\in\SN, \forall G^t\in \sG(x,t,m)
\end{equation}
where $A_m(t)=supp (G^t(x))$ and $m(t)=w(A_m(t))$.
\end{defi}

 The authors  introduced (see \cite{BB}) the best $m$-term approximation with respect to polynomials with constant coefficients as follows:
$$\mathcal{D}^*_m(x) := \inf \lbrace d(x,[1_{\eta A}]) : A\subset \SN, (\eta_n)\in \{\pm 1\}, \vert A\vert = m\rbrace.$$
Obviously, $\sigma_m(x)\leq \mathcal{D}^*_m(x)$ but, while $\sigma_m(x)\to 0$ as $m\to\infty$ it was shown that for orthonormal bases in Hilbert spaces we have $\mathcal{D}^*_m(x)\to \|x\|$ as $m\to\infty$.
The following result establishes a new description of greedy bases using the best $m$-term approximation with respect to polynomials with constant coefficients.
\begin{theorem} (\cite[Theorem 3.6]{BB})
Let $\SX$ be a Banach space and $\sB$ a Schauder basis of $\SX$.

  (i) If there exists  $C>0$ such that
    \begin{equation}\label{new}\Vert x-\mathcal{G}_m(x)\Vert \leq C\mathcal{D}^*_m(x),\; \forall x\in \SX,\; \forall m\in \mathbb{N},\end{equation}
    then $\sB$ is $C$-suppression unconditional and $C$-symmetric for largest coefficients.

   (ii) If $\sB$ is $K_s$-suppression unconditional and $C_s$-symmetric for largest coefficients then
    $$\Vert x-\mathcal{G}_m(x)\Vert \leq (K_s C_s)\sigma_m(x),\; \forall x\in \SX,\; \forall m\in \mathbb{N}.$$

\end{theorem}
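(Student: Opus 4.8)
The plan is to treat the two implications as converses of one another. Part (i) extracts the two structural properties from the single estimate \eqref{new} by feeding the algorithm cleverly perturbed vectors whose greedy selection and whose quantity $\mathcal{D}^*_m$ can be computed by hand; part (ii) is the sufficiency direction, run along the lines of the Konyagin--Temlyakov argument but with democracy replaced by symmetry for largest coefficients. Throughout I write $A$ for a greedy set of $x$ with $|A|=m$, so that $x-\mathcal{G}_m(x)=x-P_A(x)=P_{A^c}(x)$, and I set $\alpha=\min_{n\in A}|e_n^*(x)|$; since $A$ is greedy, every coefficient off $A$ has modulus at most $\alpha$. It is understood that \eqref{new} is invoked for the relevant admissible greedy selection whenever ties occur.

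For part (i), to obtain $C$-suppression unconditionality I would fix $x$ and a finite set $A$ with $|A|=m$, put $\varepsilon_n=\sig(e_n^*(x))$, and consider $x'=x+\mu\,1_{\varepsilon A}$ with $\mu>\max_{n\notin A}|e_n^*(x)|$. Then $A$ is the unique greedy set of $x'$ of order $m$, so $\mathcal{G}_m(x')=P_A(x)+\mu\,1_{\varepsilon A}$ and hence $x'-\mathcal{G}_m(x')=x-P_A(x)$, while the competitor $\mu\,1_{\varepsilon A}$ gives $\mathcal{D}^*_m(x')\le\|x'-\mu\,1_{\varepsilon A}\|=\|x\|$. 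Feeding this into \eqref{new} yields $\|x-P_A(x)\|\le C\|x\|$, and applying the same device to $\supp(x)\setminus A$ (for finitely supported $x$, then by density) produces $\|P_A(x)\|\le C\|x\|$. For symmetry for largest coefficients I would take $w$ with $\max_n|e_n^*(w)|\le1$, disjoint finite sets $A,B$ with $|A|=|B|=m$ both disjoint from $\supp(w)$, signs $\varepsilon,\eta$, and set $x'=1_{\varepsilon A}+1_{\eta B}+w$. Every coefficient of $x'$ has modulus at most $1$ and those on $A$ attain it, so $A$ is an admissible greedy set of order $m$ with residual $1_{\eta B}+w$, whereas the competitor $1_{\eta B}$ gives $\mathcal{D}^*_m(x')\le\|1_{\varepsilon A}+w\|$. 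Then \eqref{new} gives $\|1_{\eta B}+w\|\le C\|1_{\varepsilon A}+w\|$, and exchanging the roles of $(A,\varepsilon)$ and $(B,\eta)$ supplies the reverse inequality, i.e. the $C$-symmetry.

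For part (ii) it suffices, since $\sigma_m(x)=\inf\{\|x-z\|:\supp(z)\subseteq B,\ |B|=m\}$, to prove $\|x-P_A(x)\|\le K_sC_s\|x-z\|$ for an arbitrary $B$ with $|B|=m$ and an arbitrary $z$ supported on $B$. I would split this into the two clean estimates
\[
\|x-P_A(x)\|\le C_s\|x-P_B(x)\|\qquad\text{and}\qquad \|x-P_B(x)\|\le K_s\|x-z\|.
\]
The second is immediate, since $x-P_B(x)=P_{B^c}(x)=P_{B^c}(x-z)$ and suppression unconditionality gives the factor $K_s$. For the first I would write, with $w=P_{(A\cup B)^c}(x)$, $E=B\setminus A$ and $D=A\setminus B$,
\[
x-P_A(x)=w+P_E(x),\qquad x-P_B(x)=w+P_D(x),\qquad |E|=|D|.
\]
Here every coefficient of $w$ and of $P_E(x)$ has modulus at most $\alpha$, whereas those of $P_D(x)$, with $D\subseteq A$, have modulus at least $\alpha$; thus in $w+P_D(x)$ the set $D$ carries the largest coefficients. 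Symmetry for largest coefficients, which compares such a vector with the one obtained by transplanting its largest-coefficient part onto the equinumerous set $E$ with no larger moduli, then yields $\|w+P_E(x)\|\le C_s\|w+P_D(x)\|$; combining the two displays completes the proof.

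The main obstacle is precisely this transfer step $\|w+P_E(x)\|\le C_s\|w+P_D(x)\|$: one must apply the symmetry property so that exactly one factor $C_s$ appears while simultaneously absorbing the monotonicity in the coefficient moduli (at most $\alpha$ on $E$, at least $\alpha$ on $D$) and incurring no extra unconditionality constant. Routing the estimate through the intermediate vector $w+\alpha\,1_{\eta E}$ is lossy, so the precise formulation of ``symmetric for largest coefficients'' has to be used directly in one stroke. The remaining delicate points are comparatively minor: in part (i) one must verify that $A$ is an admissible greedy set for the perturbed vector (unique for the suppression estimate, tie-broken in its favour for the symmetry estimate) and that a single large $\mu$, rather than a limit, already suffices.
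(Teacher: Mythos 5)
First, a remark on scope: the paper states this theorem without proof, citing \cite{BB}, so your attempt can only be measured against the standard argument and against the one technical device the present paper does display in the proof of its own main theorem, namely the convex--hull representation $P_{B\setminus A}(x)\in \co\{S\,1_{\eta(B\setminus A)}\}$.

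Your part (i) is correct and is the standard extraction: adding $\mu 1_{\varepsilon A}$ with signs aligned to $x$ forces $A$ to be the greedy set and kills the perturbation in the residual, while testing $\mathcal{D}^*_m$ against the competitor line $[1_{\eta B}]$ produces the symmetry inequality; your treatment of ties and of the passage from $\Vert x-P_A(x)\Vert\le C\Vert x\Vert$ to $\Vert P_A(x)\Vert\le C\Vert x\Vert$ is adequate.

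Part (ii) contains a genuine gap, and you have pointed at it yourself: the inequality $\Vert w+P_E(x)\Vert\le C_s\Vert w+P_D(x)\Vert$ cannot be obtained ``in one stroke'' from the stated property, which only compares vectors whose distinguished blocks carry a \emph{constant} coefficient $t$. The missing idea is that the reduction to constant coefficients is \emph{free} on the $E$ side: since $|e_n^*(x)|\le\alpha$ for $n\in E$, the tuple $(e_n^*(x))_{n\in E}$ lies in $[-\alpha,\alpha]^E=\co(\{\pm\alpha\}^E)$, hence $\Vert w+P_E(x)\Vert\le\max_{\eta}\Vert w+\alpha 1_{\eta E}\Vert$ with no constant; this is exactly the convexity step used in the paper's proof of its main theorem, and your dismissal of the route through $w+\alpha 1_{\eta E}$ as ``lossy'' is what blocks you. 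The second problem is your choice of intermediate vector: after one application of the symmetry property one holds $\Vert w+\alpha 1_{\eta' D}\Vert$ with $\eta'_n=\sig(e_n^*(x))$, and comparing this with $w+P_D(x)=x-P_B(x)$ costs a further suppression constant (the coefficients on $D$ must be inflated from $\alpha$ up to $|e_n^*(x)|\ge\alpha$), so your factorization through $\Vert x-P_B(x)\Vert$ yields at best $C_sK_s^2$. To reach the stated constant $K_sC_s$ one must compare $w+\alpha 1_{\eta'D}$ \emph{directly} with $x-z$: one has $w+\alpha 1_{\eta'D}=\sum_n\lambda_n e_n^*(x-z)e_n$ with $\lambda_n\in[0,1]$ (equal to $1$ off $A\cup B$, to $\alpha/|e_n^*(x)|$ on $A\setminus B$, to $0$ on $B$), and a second convexity argument over the vertices $\{0,1\}^{\mathrm{supp}}$ gives $\Vert w+\alpha 1_{\eta'D}\Vert\le K_s\Vert x-z\Vert$. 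The three steps then combine to $\Vert x-\mathcal{G}_m(x)\Vert\le K_sC_s\Vert x-z\Vert$. (A final minor point: both applications of the symmetry property require it for every $t\ge\max_n|e_n^*(w)|$ rather than for $t=\max$ only; fortunately your part (i) actually establishes this stronger form, so the two halves are consistent once this is made explicit.)
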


The concepts of suppression unconditional and symmetric for largest coefficients bases can be found in \cite{BB,AA2,AW,DKOSS,VT}. We recall here that a basis is \textbf{$K_s$-suppression unconditional} if the projection operator is uniformly bounded, that is to say
$$\Vert P_A(x)\Vert \leq K_s\Vert x\Vert,\; \forall x\in\SX,\forall A\subset \SN$$
 and $\sB$ is \textbf{$C_s$-symmetric for largest coefficients} if
$$\Vert x+t1_{\varepsilon A}\Vert \leq C_s\Vert x+t1_{\varepsilon' B}\Vert,$$
for any $\vert A\vert = \vert B\vert$, $A\cap B=\emptyset$, $supp(x) \cap (A\cup B) = \emptyset$, $(\varepsilon_j), (\varepsilon'_j) \in \lbrace \pm 1\rbrace$ and $t = \max\lbrace \vert e_n^*(x)\vert : n\in supp(x)\rbrace$.

In this note we shall give a direct proof of the equivalence between condition  (\ref{old}) and (\ref{new}) even in the setting of $(t,w)$-greedy basis.

 Let us now introduce our best $m$-term approximation with respect to polynomials with constant coefficients associated to a weight sequence and the basic property to be considered in the paper.
 \begin{defi} Let $\sB$ be a normalized Schauder basis in $\mathbb{X}$, $0<t\le 1$ and  a weight sequence $\omega = \lbrace \omega_n\rbrace_{n=1}^\infty$ with $\omega_n >0$. We denote by $$\mathcal{D}_{\delta}^\omega (x) := \inf \lbrace d(x,[1_{\eta A}]) : A\subset \SN, (\eta_n)\in \{\pm 1\}, \omega(A)\leq \delta\rbrace.$$
 The basis $\sB$ is said to $(t,w)$-greedy for polynomials with constant coefficients, denoted to have {\bf $(t,w)$-PCCG property}, if there exists $D(t)>0$ such that
\begin{equation}\label{ng}\Vert x-G^t(x)\Vert \leq D(t)\mathcal{D}_{m(t)}^\omega (x), \forall x\in\SX, \forall m\in\SN, \forall G^t\in \sG(x,t,m)\end{equation}
where $A_m(t)=supp(G^t(x))$  and $m(t)=\omega(A_m(t))$.

In the case $t=1$ and $w(A)=|A|$ we simply call it the {\bf PCCG property}.
\end{defi}
Of course $\sigma_\delta ^\omega(x) \leq \mathcal{D}_\delta^\omega(x)$ for all $\delta>0$, hence if the basis is $(t,\omega)$-greedy then (\ref{ng}) holds with the $D(t)=C(t)$. We now formulate our main result which produces a direct proof of the result in \cite{BB} and give  the extension to $t$-greedy and weighted greedy versions.
\begin{theorem} Let $\sB$ be a normalized Schauder basis in $\mathbb{X}$ and let $\omega = \lbrace \omega_n\rbrace_{n=1}^\infty$ be a weight sequence  with $\omega_n >0$ for all $n\in \N$. The following are equivalent:

(i) There exist $0<s\le 1$ such that $\sB$ has the $(s,w)$-PCCG property.

 (ii) $\sB$  is $(t,\omega)$-greedy for all $0<t\le 1$.
\end{theorem}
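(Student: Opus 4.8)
The implication (ii)$\Rightarrow$(i) is immediate and requires no work: since $[1_{\eta A}]\subseteq[e_n,n\in A]$ we have $\sigma^\omega_\delta(x)\le\mathcal{D}^\omega_\delta(x)$ for every $\delta>0$, so if $\sB$ is $(s,\omega)$-greedy for one fixed $s$ then (\ref{ng}) holds with $D(s)=C(s)$ and $\sB$ has the $(s,w)$-PCCG property. The whole content lies in the reverse implication, and the plan is to squeeze out of the single rank-one hypothesis (\ref{ng}) the two structural ingredients of the Konyagin--Temlyakov theory and then recombine them into the weak weighted greedy inequality (\ref{g}) for \emph{every} $0<t\le1$.

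First I would show that $(s,w)$-PCCG forces $\sB$ to be quasi-greedy. If $\Ga$ is any $s$-greedy set for $x$ and $G^s$ the corresponding operator, then $x-G^s(x)=P_{\SN\setminus\Ga}(x)$; taking the empty set in the infimum defining $\mathcal{D}^\omega_{m(s)}$ (which is admissible, as $\omega(\emptyset)=0$) gives $\mathcal{D}^\omega_{m(s)}(x)\le\|x\|$, whence $\|P_{\SN\setminus\Ga}(x)\|\le D(s)\|x\|$ and the greedy projections are uniformly bounded. Second I would derive the weighted symmetric-for-largest-coefficients property: given disjoint $A,B$ with $\omega(A)\le\omega(B)$, both disjoint from $\supp x$, signs $\eta,\eta'$, and $t=\max_{n\in\supp x}|e_n^*(x)|$, apply (\ref{ng}) to $z=x+t1_{\eta A}+(1+\e)t1_{\eta' B}$. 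The block $B$ is an $s$-greedy set of $z$ (its coefficients $(1+\e)t$ dominate all others and $(1+\e)\ge s$), so a suitable $G^s$ removes it and leaves $z-G^s(z)=x+t1_{\eta A}$, while choosing $F=A$ (allowed since $\omega(A)\le\omega(B)$) and constant $c=t$ in the infimum yields $\mathcal{D}^\omega_{\omega(B)}(z)\le\|x+(1+\e)t1_{\eta'B}\|$. Letting $\e\to0$ gives $\|x+t1_{\eta A}\|\le D(s)\|x+t1_{\eta'B}\|$, and the same device applied to $z=1_{\eta A}+(1+\e)1_{\eta'B}$ produces the signed weighted democracy estimate $\|1_{\eta A}\|\le D(s)\|1_{\eta'B}\|$ whenever $\omega(A)\le\omega(B)$.

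Finally I would combine quasi-greediness with the weighted symmetric-for-largest property to obtain (\ref{g}) for all $t$, which is the substance of the argument. Fixing $t$, a $t$-greedy set $\Ga$ with $m(t)=\omega(\Ga)$, and a near-optimal $z=\sum_{n\in F}b_ne_n$ with $\omega(F)\le m(t)$ and $\|x-z\|\le(1+\e)\sigma^\omega_{m(t)}(x)$, I would split $x-G^t(x)=P_{\SN\setminus\Ga}(x)=P_{\SN\setminus\Ga}(x-z)+P_{\SN\setminus\Ga}(z)$, estimating the first summand by unconditionality and the second by transporting the coordinates of $F\setminus\Ga$ onto a greedy block of comparable weight through the symmetric-for-largest estimate, exactly as in the proof of \cite[Theorem 3.6]{BB} and of the Konyagin--Temlyakov characterization. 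The main obstacle is precisely here: the genuine best $m$-term error $\sigma^\omega$ allows \emph{arbitrary} coefficients on $F$, so controlling $P_{\SN\setminus\Ga}(x-z)$ requires full suppression unconditionality, and quasi-greediness alone is not enough to supply it. I expect to recover unconditionality by upgrading the sign- and swap-comparisons already produced by (\ref{ng}) — feeding the inequality vectors in which a non-constant greedy block is removed while a disjoint constant decoy block is annihilated by the extremal polynomial — and then iterating through the coefficient levels with the help of the symmetric-for-largest property, as in \cite{BB}. The remaining care is purely bookkeeping: replacing cardinalities by $\omega(\cdot)$ everywhere, and verifying that all constants depend only on $D(s)$ and on $t$ through a factor $O(1/t)$, so that (ii) holds simultaneously for every $t\in(0,1]$.
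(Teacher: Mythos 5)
Your route is genuinely different from the paper's: instead of the paper's direct argument, which applies (\ref{ng}) to carefully shifted auxiliary vectors and never isolates democracy or unconditionality as intermediate properties, you aim to extract the structural ingredients (uniform boundedness of greedy projections, a weighted symmetric-for-largest-coefficients estimate) and then rerun the Konyagin--Temlyakov sufficiency argument. Your first two steps are correct: since $[1_{\eta A}]$ contains $0$ one always has $\mathcal{D}^\omega_\delta(x)\le\|x\|$, hence $\|x-G^s(x)\|\le D(s)\|x\|$; and the decoy vector $z=x+t1_{\eta A}+(1+\e)t1_{\eta'B}$ does yield $\|x+t1_{\eta A}\|\le D(s)\|x+t1_{\eta'B}\|$ whenever $\omega(A)\le\omega(B)$, $A\cap B=\emptyset$.

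The gap is exactly where you flag it, and what you propose does not close it. To control $P_{\SN\setminus\Ga}(x-z)$ you need $\|y-P_Cy\|\le K\|y\|$ for an \emph{arbitrary} vector $y$ and an \emph{arbitrary} finite set $C$; quasi-greediness only bounds the complementary projection along greedy sets of $y$ itself, and the ``iteration through coefficient levels'' you invoke is the truncation machinery of quasi-greedy bases, which cannot upgrade quasi-greediness to suppression unconditionality (quasi-greedy bases need not be unconditional). Moreover, the device you hint at --- a \emph{disjoint} constant decoy block annihilated by the extremal polynomial --- only returns the tautology $\|y\|\le D(s)\|y\|$. The correct move, and the one the paper uses to prove (\ref{three}), is to put the decoy \emph{on top of} the set you wish to suppress: for arbitrary $y$ and finite $C$ set $\mu=s\max_{j\notin C}|e_j^*(y)|+\max_{j\in C}|e_j^*(y)|$, so that $C$ becomes an $s$-greedy set of $y+\mu 1_C$; then $(y+\mu 1_C)-G^s(y+\mu 1_C)=y-P_Cy$ while $\mathcal{D}^\omega_{\omega(C)}(y+\mu 1_C)\le\|(y+\mu 1_C)-\mu 1_C\|=\|y\|$, giving $\|y-P_Cy\|\le D(s)\|y\|$ in one line. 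With that lemma in hand your plan would go through (modulo the weighted and weak-parameter bookkeeping you defer, including the case $t<s$ which the paper treats separately), but as written the proposal does not contain a proof of the one property on which your entire third step rests.
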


\begin{proof} Only the implication (i) $\Longrightarrow$ (ii) needs a proof.
Let us  assume that (\ref{ng}) holds for some $0<s\le 1$. Let $0<t\le 1$,  $x \in\SX$, $m\in \N$  and $G^t\in \sG(x,t,m)$. We write $G^t(x) = P_{A_m(t)}(x)$ with $A_m(t)\in \mathcal G(x,t,m)$. For each $\varepsilon >0$  we choose $z = \sum_{n\in B}e_n^*(x)e_n$ with  $\omega(B)\leq \omega(A_m(t))$ and $\Vert x-z\Vert \leq \sigma_{\omega(A_m)}^\omega (x) + \varepsilon$.

We  write
$$x- P_{A_m(t)}(x)= x- P_{A_m(t)\cup B} (x) +P_{B\setminus A_m(t)}(x).$$
Taking into account that $P_{B\setminus A_m(t)}(x)\in co(\lbrace S 1_{\eta (B\setminus A_m(t))} : \vert \eta_j\vert = 1\rbrace)$ for any $S\ge \underset{j\in B\setminus A_m(t)}{\max}\vert e_j^*(x)\vert$, it suffices to show that there exists $R\ge 1$ and $C(t)>0$ such that
\begin{equation}\label{final}
\|x- P_{A_m(t)\cup B} (x) + R\gamma  1_{\eta(B\setminus A_m(t))}\|\le C(t) \|x-z\|
\end{equation}
for any  choice of signs $(\eta_j)_{j\in B\setminus A_m(t)}$  where $\gamma = \underset{j\in B\setminus A_m(t)}{\max}\vert e_j^*(x)\vert$.

Let us assume first that $t\geq s$. We shall  show that
\begin{equation} \label{two}\Vert x- P_{(A_m(t)\cup B)}(x)+\frac{t}{s}\gamma 1_{\eta B\setminus A_m(t)}\Vert \le D(s)\Vert x- P_B(x)\Vert\end{equation}
for any choice of signs $(\eta_j)_{j\in B\setminus A_m(t)}$.

Given $(\eta_j)_{j\in B\setminus A_m(t)}$ we consider
$$y_{\eta}= x- P_B(x)+ \frac{t}{s}\gamma 1_{\eta(B\setminus A_m(t))}=\sum_{n\notin B} e_n^*(x)e_n+ \sum_{n\in B\setminus A_m(t)}\frac{t}{s}\gamma\eta_n e_n .$$
Note that $$\min_{n\in A_m(t)\setminus B}|e_n^*(y_\eta)|= \min_{n\in A_m(t)\setminus B}|e_n^*(x)|\ge \min_{n\in A_m(t)}|e_n^*(x)|$$ and $$s\max_{n\in (A_m(t)\setminus B)^c}|e_n^*(y_\eta)|=\max\{ s\max_{n\notin A_m(t)}|e_n^*(x)|, t\gamma\} .$$
Therefore, since $t\ge s$, we conclude that
$$\min_{n\in A_m(t)\setminus B}|e_n^*(y_\eta)|\ge s \max_{n\in (A_m(t)\setminus B)^c}|e_n^*(y_\eta)|.$$
 Hence $A_m(t)\setminus B \in \mathcal G(y_\eta, s, N)$ with $N = \vert A_m(t)\setminus B\vert$. We write $G^s(y_\eta) = P_{A_m(t)\setminus B}(x)$  and notice that
$$ y_\eta- G^s(y_\eta)= x- P_{A_m(t)\cup B}(x)+ \frac{t}{s}\gamma 1_{\eta(B\setminus A_m(t))}. $$

Since $\omega(B)\leq \omega(A_m(t))$ we have also that $\omega(B\setminus A_m(t))\leq \omega(A_m(t)\setminus B)$.
Hence for $N(s)=\omega(A_m(t)\setminus B)$ we conclude
\begin{eqnarray*}
\Vert x- P_{(A_m(t)\cup B)}(x)+\frac{t}{s}\gamma 1_{\eta B\setminus A_m(t)}\Vert &\leq& D(s)\mathcal{D}_{N(s)}^\omega (y_\eta)\\\nonumber
 &\leq& D(s) \Vert y_\eta-\frac{t}{s}\gamma1_{\eta B\setminus A_m(t)}\Vert\\\nonumber
&=&D(s)\Vert x- P_B(x)\Vert.
\end{eqnarray*}

 Now, let $y=x-z+ \mu 1_B$  for $\mu = s \, \underset{j\notin B}{\max}\vert e_j^*(x-z)\vert +\underset{j\in B}{\max}\vert e_j^*(x-z)\vert.$\newline

Then $$\min_{j\in B} |\mu + e_n^*(x-z)|\ge s\max_{j\notin B} |e_n^*(x-z)|,$$ which gives that $B\in \mathcal G(y,s, |B|)$ and we obtain
$G^s(y) = P_B(x-z)+\mu 1_{B}$. Hence
\begin{equation}\label{three}
\Vert x-P_B(x)\Vert = \Vert y-G^s(y)\Vert \le D(s)\Vert y - \mu 1_B\Vert= D(s)\Vert x-z\Vert.
\end{equation}

Therefore, by $\eqref{two}$ and $\eqref{three}$ we obtain
$$\Vert x- P_{(A_m(t)\cup B)}(x)+\frac{t}{s}\gamma 1_{\eta B\setminus A_m(t)}\Vert\le D(s)^2\|x-z\|.$$
Then, for $s\le t$ we obtain that $\sB$ is $(t,w)$-greedy with constant $C(t)\le D(s)^2$.

We  now consider the case $s>t$. We use the following estimates:
$$\Vert x- P_{(A_m(t)\cup B)}(x)+\gamma 1_{\eta B\setminus A_m(t)}\Vert\le \Vert x- P_{ B}(x)\Vert+\Vert P_{A_m(t)\setminus B}(x)\Vert+\gamma \Vert1_{\eta B\setminus A_m(t)}\Vert.$$
Arguing as above, using now
$$\tilde y_{\eta}= P_{A_m(t)\setminus B}(x)+ \frac{t}{s}\gamma 1_{\eta(B\setminus A_m(t))},$$
we conclude that  $\frac{t}{s}\gamma \Vert1_{\eta B\setminus A_m(t)}\Vert\le D(s)\|P_{A_m(t)\setminus B}(x)\|$.

The argument used to show (\ref{three}) gives  $ \|z- P_C z\|\le D(s) \|z\|$ for all $z\in \mathbb{X}$ and finite set $C$. Therefore
$$\Vert P_{A_m(t)\setminus B}(x)\Vert= \Vert P_{A_m(t)}(x-P_{B}x)\Vert\le (1+ D(s))\|x-P_Bx\|.$$
Putting all together we have
$$\Vert x- P_{(A_m(t)\cup B)}(x)+\gamma 1_{\eta B\setminus A_m(t)}\Vert\le (2+\frac{t+s}{t}D(s))\|x-P_Bx\|,$$
and therefore $\sB$ is $(t,w)$-greedy with constant $C(t)\le (2+\frac{t+s}{t}D(s))D(s).$
\end{proof}

\begin{corollary}
If $t=1$ and $\omega(A) = \vert A\vert$, then $\sB$ has the PCCG property if and only if $\sB$ is greedy.
\end{corollary}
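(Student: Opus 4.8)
The plan is to read off the Corollary as the special case $\omega(A)=|A|$ of the Theorem, so that essentially no new work is required beyond matching definitions. With this weight one has $\sigma^\omega_{|A|}(x)=\sigma_m(x)$ and $\mathcal{D}^\omega_{|A|}(x)=\mathcal{D}^*_m(x)$ whenever $|A|=m$, and the support of any greedy approximant $\mathcal{G}_m(x)$ is exactly a $1$-greedy set of cardinality $m$. Thus $\mathcal{G}_m$ runs through the $1$-greedy operators $G^1\in\sG(x,1,m)$ as the greedy ordering varies, and the PCCG property is precisely the $(1,|\cdot|)$-PCCG property.

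For the implication that a greedy basis has the PCCG property, I would invoke only the elementary bound $\sigma_m(x)\le\mathcal{D}^*_m(x)$ recorded earlier. If $\sB$ is greedy then $\|x-\mathcal{G}_m(x)\|\le C\sigma_m(x)\le C\mathcal{D}^*_m(x)$ for every $x$, every $m$ and every greedy approximant; since each such approximant is a $1$-greedy operator, this is exactly (\ref{ng}) with $t=1$, $\omega(A)=|A|$ and $D=C$. This is the trivial direction, mirroring the remark that a $(t,\omega)$-greedy basis always has the $(t,\omega)$-PCCG property.

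For the converse, I would apply the Theorem directly with $s=1$ and $\omega(A)=|A|$. The hypothesis that $\sB$ has the PCCG property is precisely statement (i) of the Theorem for this $s$ and this weight, so the Theorem yields (ii): $\sB$ is $(t,|\cdot|)$-greedy for every $0<t\le 1$. Specializing to $t=1$ gives $\|x-G^1(x)\|\le C(1)\sigma_m(x)$ for all $G^1\in\sG(x,1,m)$, and choosing $G^1=\mathcal{G}_m$ recovers (\ref{old}); hence $\sB$ is greedy.

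I expect no genuine obstacle here, since the substance has been absorbed into the Theorem. The only point requiring care is the bookkeeping: verifying that under $\omega(A)=|A|$ the weighted quantities $\sigma^\omega_\delta$ and $\mathcal{D}^\omega_\delta$ and the weighted $t$-greedy operators collapse to their unweighted counterparts, and that $\mathcal{G}_m$ is a legitimate $1$-greedy operator, so that the all-$G^1$ formulation of the PCCG property and the $\mathcal{G}_m$-formulation of greediness coincide.
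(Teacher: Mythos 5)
Your proposal is correct and matches the paper's intent: the corollary is stated without separate proof precisely because it is the specialization $s=t=1$, $\omega(A)=|A|$ of the Theorem for the hard direction, combined with the inequality $\sigma^\omega_\delta(x)\le\mathcal{D}^\omega_\delta(x)$ (noted just before the Theorem) for the easy direction. Your bookkeeping about $\mathcal{G}_m$ being a $1$-greedy operator and the weighted quantities collapsing to the unweighted ones is exactly the right (and only) verification needed.
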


\begin{corollary}
If $\omega(A) = \vert A\vert$, then $\sB$ has the $t$-PCCG property if and only if $\sB$ is $t$-greedy.
\end{corollary}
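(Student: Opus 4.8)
The plan is to prove only (i) $\Rightarrow$ (ii), since the reverse follows immediately from $\sigma_\delta^\omega(x) \le \mathcal{D}_\delta^\omega(x)$. I would fix $0 < s \le 1$ witnessing the $(s,w)$-PCCG property and aim to establish the $(t,w)$-greedy inequality \eqref{g} for an arbitrary $0 < t \le 1$. The strategy is to start from a $t$-greedy set $A_m(t)$ for $x$ and a near-optimal weighted approximant $z = P_B(x)$ with $\omega(B) \le \omega(A_m(t))$ and $\|x - z\| \le \sigma^\omega_{\omega(A_m(t))}(x) + \varepsilon$, then decompose $x - P_{A_m(t)}(x) = \bigl(x - P_{A_m(t) \cup B}(x)\bigr) + P_{B \setminus A_m(t)}(x)$ so as to separate the "good" projection error from a correction term supported on $B \setminus A_m(t)$.

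**The key move**, and what I expect to be the central idea, is a convexity observation: since $P_{B \setminus A_m(t)}(x)$ lies in the convex hull of the vectors $S \cdot 1_{\eta(B \setminus A_m(t))}$ over all sign choices $\eta$ (for $S \ge \max_{j \in B\setminus A_m(t)} |e_j^*(x)|$), it is enough to bound $\|x - P_{A_m(t)\cup B}(x) + R\gamma\, 1_{\eta(B\setminus A_m(t))}\|$ uniformly in $\eta$, where $\gamma$ is that maximum. So the real work reduces to proving an estimate of the form \eqref{final}. To do this I would manufacture an auxiliary vector $y_\eta = x - P_B(x) + \tfrac{t}{s}\gamma\, 1_{\eta(B \setminus A_m(t))}$ engineered so that $A_m(t) \setminus B$ becomes an $s$-greedy set of the right cardinality for $y_\eta$; verifying this requires checking the inequality $\min_{n \in A_m(t)\setminus B}|e_n^*(y_\eta)| \ge s \max_{n \notin A_m(t)\setminus B}|e_n^*(y_\eta)|$, which holds precisely because $A_m(t) \in \sG(x,t,m)$ and the inserted coefficients have size $\tfrac{t}{s}\gamma$. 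Applying the PCCG hypothesis to $y_\eta$, together with the weight comparison $\omega(B \setminus A_m(t)) \le \omega(A_m(t)\setminus B)$ inherited from $\omega(B) \le \omega(A_m(t))$, yields $y_\eta - G^s(y_\eta) = x - P_{A_m(t)\cup B}(x) + \tfrac{t}{s}\gamma\,1_{\eta(B\setminus A_m(t))}$ and hence the desired bound by $D(s)\|x - P_B(x)\|$. A parallel auxiliary-vector argument (perturbing $z$ by a large multiple of $1_B$) gives $\|x - P_B(x)\| \le D(s)\|x - z\|$ and, as a byproduct, the uniform projection bound $\|w - P_C w\| \le D(s)\|w\|$.

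**The main obstacle** I anticipate is the split into the two regimes $t \ge s$ and $s > t$. When $t \ge s$ the construction above goes through cleanly, and the factor $\tfrac{t}{s}\gamma \ge \gamma$ is exactly what lets me absorb the convex-hull correction term while keeping $A_m(t)\setminus B$ an $s$-greedy set; chaining the two estimates gives $C(t) \le D(s)^2$. When $s > t$, however, the inserted coefficient $\tfrac{t}{s}\gamma < \gamma$ is too small to dominate directly, so I cannot immediately bound $\gamma\,\|1_{\eta(B\setminus A_m(t))}\|$. The fix is to abandon the single clean inequality and instead use the triangle inequality $\|x - P_{A_m(t)\cup B}(x) + \gamma 1_{\eta(B\setminus A_m(t))}\| \le \|x - P_B(x)\| + \|P_{A_m(t)\setminus B}(x)\| + \gamma\|1_{\eta(B\setminus A_m(t))}\|$, then control the last term via the auxiliary vector $\tilde y_\eta = P_{A_m(t)\setminus B}(x) + \tfrac{t}{s}\gamma\,1_{\eta(B\setminus A_m(t))}$ to get $\tfrac{t}{s}\gamma\|1_{\eta(B\setminus A_m(t))}\| \le D(s)\|P_{A_m(t)\setminus B}(x)\|$, and bound $\|P_{A_m(t)\setminus B}(x)\| = \|P_{A_m(t)}(x - P_B x)\| \le (1 + D(s))\|x - P_B x\|$ using the projection estimate. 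Collecting terms produces the admittedly less sharp constant $C(t) \le (2 + \tfrac{t+s}{t}D(s))D(s)$, which still suffices. The delicate point throughout is keeping careful track of which coefficients are being inserted and at what magnitude, so that the greedy-set membership conditions are satisfied with the correct parameter $s$ rather than $t$.
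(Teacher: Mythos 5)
Your proposal is correct and follows the paper's own argument essentially verbatim: the corollary is deduced from the main theorem, and your outline reproduces that theorem's proof — the same decomposition via $P_{B\setminus A_m(t)}(x)$, the same convex-hull reduction to estimate \eqref{final}, the same auxiliary vectors $y_\eta$ and $\tilde y_\eta$, the same case split $t\ge s$ versus $s>t$, and the same constants $D(s)^2$ and $(2+\tfrac{t+s}{t}D(s))D(s)$. Nothing further is needed.
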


\section{A remark on the Haar system}

Throughout this section $|E|$ stands for the Lebesgue measure of a set in $[0,1]$, $\mathcal D$ for the family of dyadic intervals in $[0,1]$ and  $card(\Lambda)$ for the number of dyadic elements in $\Lambda$.
We denote by $\mathcal{H}:=\lbrace H_I\rbrace$ the Haar basis in $[0,1]$, that is to say
$$H_{[0,1]} (x) = 1\; \text{for}\; x\in [0,1),$$
and for  $I\in \mathcal D$ of the  form $I = [(j-1)2^{-n}, j2^{-n})$, $j=1,..,2^n$, $n= 0,1,...$ we have
\begin{displaymath}
H_{I}(x) = \left\{ \begin{array}{ll}
2^{n/2} & \mbox{if $x\in [(j-1)2^{-n}, (j-\frac{1}{2})2^{-n})$,} \\
-2^{n/2} & \mbox{if $x\in [(j-\frac{1}{2})2^{-n}, j2^{-n})$,} \\
0 & \mbox{otherwise.}
\end{array}
\right.
\end{displaymath}

We write $$c_I(f) := \langle f,H_I\rangle = \int_0^1 f(x)H_I(x)dx \hbox{ and } c_I(f,p):= \Vert c_I(f)H_I\Vert_p, \quad 1\le p<\infty.$$
 It is well known that $\mathcal H$  is an orthonormal basis in $L^2([0,1])$ and for $1<p<\infty$ we can use the Littlewood-Paley's Theorem which gives
\begin{equation}\label{lp} c_p \left\Vert \left( \sum_I \vert c_I(f,p)\frac{H_I}{\Vert H_I\Vert_p}\vert^2\right)^{1/2}\right\Vert_p \leq \Vert f\Vert_p \leq  C_p\left\Vert \left( \sum_I \vert c_I(f,p)\frac{H_I}{\Vert H_I\Vert_p}\vert^2\right)^{1/2}\right\Vert_p
\end{equation}
 to conclude that $(\frac{H_I}{\Vert H_I\Vert_p})_I$ is an unconditional basis in $L^p([0,1])$. Denoting $f<<_p g$ whenever $c_I(f,p)\le c_I(g,p)$ for all dyadic intervals $I$ we obtain from (\ref{lp}) the existence of a constant $K_p$ such that
 \begin{equation}\label{uncon}
 \|f\|_p\le K_p \|g\|_p \quad \forall f, g\in L^p([0,1]) \hbox{ with } f<<_p g,
 \end{equation}
 and also
 \begin{equation}\label{h}
\|P_\Lambda g\|\le K_p \|g\|_p \quad \forall g\in L^p \quad \forall \Lambda\subset \mathcal D.
\end{equation}
Regarding the greedyness of the Haar basis it was
V. N. Temlyakov the first one who proved (see \cite{T}) that the every wavelet basis $L_p$-equivalent to the Haar basis is $t$-greedy in $L_p([0,1])$ with $1<p<\infty$ for any $0<t\le 1$.

 Let $\omega:[0,1]\to \R^+$ be a measurable weight and, as usual, we denote $\omega(I)=\int_I \omega(x)dx$ and $m_I(\omega)=\frac{\omega(I)}{|I|}$ for any $I\in \mathcal D$. In the space $L^p(\omega)=L^p([0,1],\omega)$ we denote $\|f\|_{p, \omega}=(\int_0^1 |f(x)|^p\omega(x) dx)^{1/p}$ and $$ c_I(f,p,\omega):= \Vert c_I(f)H_I\Vert_{p,\omega}= |c_I(f)|\frac{\omega(I)^{1/p}}{|I|^{1/2}}. $$  Recall that $\omega$ is said to be a
dyadic $A_p$-weight (denoted $\omega \in A^{d}_p$) if
\begin{equation}
A^{d}_p(\omega)= \sup_{I\in \mathcal D} m_I(\omega) \Big( m_I(\omega^{-1/(p-1)})\Big)^{p-1}<\infty.
\end{equation}

As one may expect, Littlewood-Paley theory holds for weights in the dyadic $A_p$-class.
\begin{theorem} (see \cite{ABM, I} for the multidimensional case) If $\omega \in A^{d}_p$ then
\begin{eqnarray}\label{uncon}  \Vert f\Vert_{p,\omega} \approx\left\Vert \left( \sum_I \vert c_I(f,p,\omega)\frac{H_I}{\Vert H_I\Vert_{p,\omega}}\vert^2\right)^{1/2}\right\Vert_{p,\omega}.
\end{eqnarray}
In particular $(\frac{H_I}{\Vert H_I\Vert_{p,\omega}})_I$ is an unconditional basis in $L^p(\omega)$ for $1<p<\infty$.
\end{theorem}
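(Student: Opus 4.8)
The plan is first to simplify the right-hand side and then to recognise the resulting quantity as a classical object. Since $|H_I|=|I|^{-1/2}$ on $I$ and vanishes off $I$, one has $\|H_I\|_{p,\omega}=|I|^{-1/2}\omega(I)^{1/p}$, whence
\[
c_I(f,p,\omega)\,\frac{H_I}{\|H_I\|_{p,\omega}}=|c_I(f)|\,\|H_I\|_{p,\omega}\,\frac{H_I}{\|H_I\|_{p,\omega}}=|c_I(f)|\,H_I .
\]
Thus the square function in the statement is independent of $p$ and $\omega$ and equals the usual dyadic (martingale) square function
\[
Sf:=\Big(\sum_{I\in\mathcal D}|c_I(f)|^2|H_I|^2\Big)^{1/2}.
\]
The assertion therefore reduces to the weighted Littlewood--Paley equivalence $\|f\|_{p,\omega}\approx\|Sf\|_{p,\omega}$ for $\omega\in A^d_p$. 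Granting this, unconditionality is automatic: for any signs $(\varepsilon_I)$ the function $\sum_I\varepsilon_I c_I(f)H_I$ has the same square function as $f$, hence a comparable $L^p(\omega)$-norm, which is exactly the unconditionality of $(H_I/\|H_I\|_{p,\omega})_I$.

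To prove the equivalence I would pass to the martingale language. Letting $\mathcal F_n$ be the $\sigma$-algebra generated by the dyadic intervals of length $2^{-n}$, the partial sums of $f=\sum_I c_I(f)H_I$ are the conditional expectations $E_nf$, the Haar blocks are the martingale differences, $Sf$ is the martingale square function, and $Nf:=\sup_n|E_nf|$ is the martingale maximal function (dominated by the dyadic maximal function $M_d f$). Since $|f|\le Nf$ almost everywhere and, by the dyadic Muckenhoupt maximal theorem, $\|Nf\|_{p,\omega}\lesssim\|f\|_{p,\omega}$ for $\omega\in A^d_p$, one has $\|Nf\|_{p,\omega}\approx\|f\|_{p,\omega}$; it therefore suffices to establish $\|Sf\|_{p,\omega}\approx\|Nf\|_{p,\omega}$.

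The heart of the matter is the pair of Burkholder good-$\lambda$ inequalities
\[
\big|\{Sf>2\lambda,\ Nf\le\gamma\lambda\}\big|\le C\gamma^2\big|\{Sf>\lambda\}\big|,\qquad
\big|\{Nf>2\lambda,\ Sf\le\gamma\lambda\}\big|\le C\gamma^2\big|\{Nf>\lambda\}\big|,
\]
which hold for Lebesgue measure by the standard stopping-time argument, together with their transfer to the measure $\omega\,dx$. This transfer is exactly where the hypothesis enters: every $\omega\in A^d_p$ belongs to $A^d_\infty$, hence satisfies a quantitative estimate $\omega(E)\le C(|E|/|J|)^\delta\,\omega(J)$ for measurable $E\subset J\in\mathcal D$ with some $\delta>0$; because the exceptional sets produced by the stopping-time argument split into dyadic intervals on which the Lebesgue good-$\lambda$ bound is local, this $A^d_\infty$ inequality converts the two inequalities above into their $\omega$-analogues (with $\gamma^2$ replaced by a possibly smaller positive power of $\gamma$). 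Integrating the $\omega$-versions against $p\lambda^{p-1}\,d\lambda$ and absorbing, one gets $\|Sf\|_{p,\omega}\approx\|Nf\|_{p,\omega}$, and combining with the previous paragraph yields $\|f\|_{p,\omega}\approx\|Sf\|_{p,\omega}$.

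I expect the genuinely difficult step to be this good-$\lambda$ transfer, i.e.\ the passage from Lebesgue measure to $\omega\,dx$ using the $A^d_\infty$ property; this is the only point at which the Muckenhoupt condition is used and it carries the analytic content of the theorem. A cleaner organisation, which isolates all the weight theory into a single base case, is to invoke Rubio de Francia extrapolation: it is enough to prove the equivalence on $L^2(\omega)$ for every $\omega\in A^d_2$, and this $L^2(\omega)$ equivalence is, by a direct computation of $\|Sf\|_{2,\omega}^2$, the quadratic estimate $\sum_{I\in\mathcal D}|c_I(f)|^2 m_I(\omega)\approx\int_0^1|f|^2\,\omega\,dx$; extrapolation then delivers the full range $1<p<\infty$ and all $\omega\in A^d_p$ at once.
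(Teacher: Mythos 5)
The paper does not prove this theorem at all: it is quoted with a pointer to the references \cite{ABM, I}, so there is no in-paper argument to compare against. Your outline is the standard classical route to the weighted Littlewood--Paley equivalence, and it is essentially the one underlying those references. Your opening reduction is correct and worth making explicit: since $\|H_I\|_{p,\omega}=|I|^{-1/2}\omega(I)^{1/p}$ and $c_I(f,p,\omega)=|c_I(f)|\,\|H_I\|_{p,\omega}$, the summands collapse to $|c_I(f)|\,H_I$ and the right-hand side is the ordinary dyadic square function $Sf$, independent of $p$ and $\omega$; the deduction of unconditionality from $\|f\|_{p,\omega}\approx\|Sf\|_{p,\omega}$ is then immediate and correct. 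The chain $\|f\|_{p,\omega}\le\|Nf\|_{p,\omega}\lesssim\|f\|_{p,\omega}$ via the dyadic maximal theorem, followed by $\|Sf\|_{p,\omega}\approx\|Nf\|_{p,\omega}$ via good-$\lambda$, is the right architecture, and you correctly locate where the hypothesis enters: the $A^d_\infty$ estimate $\omega(E)\le C(|E|/|J|)^{\delta}\omega(J)$ applied on the maximal dyadic components of the exceptional set. Two caveats on completeness rather than correctness. First, the weighted transfer of the good-$\lambda$ inequalities is asserted, not carried out; the localization is genuinely available for the dyadic martingale (the sets $\{Nf>\lambda\}$ and the stopped versions of $\{S_nf>\lambda\}$ decompose into maximal dyadic intervals on which the Lebesgue bound is local), but this is the step a referee would ask you to write out. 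Second, in the extrapolation variant the ``direct computation'' only identifies $\|Sf\|_{2,\omega}^2=\sum_I|c_I(f)|^2m_I(\omega)$; the equivalence of this quantity with $\|f\|_{2,\omega}^2$ for $\omega\in A^d_2$ is itself the substantive weighted $L^2$ theorem (equivalent to the $L^2(\omega)$-boundedness of martingale transforms) and is not proved by that computation, so this route does not actually shorten the argument -- it relocates all the work into an unproved base case. With the good-$\lambda$ details supplied, your proof is sound and would serve as a self-contained replacement for the citation.
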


 The greedyness of the Haar basis in $L^p(\omega)$ goes back to M. Izuki (see \cite{I, IS}) who showed that this holds for weights in the class $A_p^d$.
 We shall use the ideas in these papers to show that the Haar basis satisfies the PCCG property for certain spaces defined using the Littlewood-Paley theory.
  \begin{defi} Let $\omega:[0,1]\to \R^+$ be a measurable weight and $1\le p<\infty$. For each finite set of dyadic intervals $\Lambda$ we define
  $f_\Lambda=\sum_{I\in \Lambda} c_I(f)H_I=\sum_{I\in \Lambda} c_I(f,p,\omega)\frac{H_I}{\Vert H_I\Vert_{p,\omega}}$ and write $$\|f\|_{X^p(\omega)}= \left\Vert\left( \sum_{I\in \Lambda} \vert c_I(f,p,\omega)\frac{H_I}{\Vert H_I\Vert_{p,\omega}}\vert^2\right)^{1/2}\right\Vert_{p,\omega}.$$
  The closure of $span(f_\Lambda: card(\Lambda)<\infty)$ under this norm will be denoted $X^p(\omega)$.
  \end{defi}
  From the definition  $(\frac{H_I}{\Vert H_I\Vert_{p,\omega}})_I$ is an unconditional basis with constant 1 in $X^p(\omega)$ and due to (\ref{uncon}) $X^p(\omega)=L^p(\omega)$ whenever $\omega\in A_p^d$.  Our aim is to analyze conditions on the weight $\omega$ for the basis to be greedy. For such a purpose we do not need the weight to belong to $A_p^d$. In fact analyzing the proof in \cite{I, IS} one notices that only  the dyadic reverse doubling condition (see \cite[p. 141]{GCRF}) was used. Recall that a weight $\omega$ is said to satisfies {\bf the dyadic reverse doubling condition} if there exists $\delta<1$ such that
  \begin{equation}\label{dc}\omega(I')\le \delta \omega (I), \forall I,I'\in \mathcal D \hbox{ with } I'\subsetneq I.
  \end{equation}
  Let us introduce certain weaker conditions.
  \begin{defi} Let $\alpha>0$ and $\omega$ be a measurable weight. We shall say that  $\omega$  satisfies {\bf the dyadic reverse Carleson  condition} of order $\alpha$ with constant $C>0$ whenever
  \begin{equation}\label{cc}\sum_{I\in \mathcal D, J\subseteq I}\omega(I)^{-\alpha}\le C \omega(J)^{-\alpha}, \forall J\in \mathcal D .
  \end{equation}
  \end{defi}

  \begin{defi}  Let $\alpha>0$ and two sequences $(w_I)_{I\in \mathcal D}$ and  $(v_I)_{I\in \mathcal D}$ of positive real numbers. We say that the pair $\Big((w_I)_{I\in \mathcal D}, (v_I)_{I\in \mathcal D}\Big)$ satisfies  $\alpha-{\bf DRCC }$ with constant $C>0$ whenever
  \begin{equation}\label{cc1}\sum_{I\in \mathcal D, J\subseteq I}w_I^{-\alpha}\le C v_J^{-\alpha}, \forall J\in \mathcal D .
  \end{equation}
  \end{defi}

  \begin{Remark} \label{n} (i) If $\omega\in \cup_{p> 1}A_p^w$ then $\omega$ satisfies the dyadic reverse doubling condition (see \cite[p 141]{GCRF}).

  (ii) If $\omega$ satisfies the dyadic reverse doubling condition then $\omega$ satisfies the dyadic reverse Carleson condition of order $\alpha$ with constant $\frac{1}{1-\delta^\alpha}$ for any $\alpha>0$.

  Indeed, $$\sum_{J\subseteq I}\omega(I)^{-\alpha}\le \omega(J)^{-\alpha}+ \omega(J)^{-\alpha}\sum_{m=1}^{\infty} \delta^{m\alpha}\le \frac{1}{1-\delta^\alpha}\omega(J)^{-\alpha}.$$
  (iii) If $\omega$ satisfies the dyadic reverse Carleson condition of order $\alpha$ and $w_I=\omega(I)$ for each $I\in \mathcal D$ then
$\Big((w_I)_{I\in \mathcal D},(w_I)_{I\in \mathcal D}\Big)$ satisfies $\alpha$-{\bf DRCC }.
\end{Remark}

   We need the following lemmas, whose  proofs are essentially included in  \cite{CDH, I, IS}.

   \begin{lemma}  \label{1c} Let   $\omega$ be a weight and $(v_I)_{I\in \mathcal D}$ be a sequence of positive real numbers such that
 $\Big((v_I)_{I\in \mathcal D}, (\omega(I))_{I\in \mathcal D}\Big)$ satisfies $ 1$-{\bf DRCC } with constant $C$.
Then 
\begin{equation}\label{dem}
\left(\sum_{I\in \Lambda} \frac{\omega(I)}{v_I}\right)^{1/p}\le C\left\|\sum_{I\in \Lambda} \frac{H_I}{\Vert H_I\Vert_{p,\omega}}\right\|_{X^p(\omega)}, \forall 1\le p<\infty.
\end{equation}
\end{lemma}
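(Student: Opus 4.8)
The plan is to evaluate the right-hand side of \eqref{dem} explicitly and then recover the left-hand side from a single dominant term of the square function. First I would record that $\|H_I\|_{p,\omega}=|I|^{-1/2}\omega(I)^{1/p}$, so that the element $f=\sum_{I\in\Lambda}H_I/\|H_I\|_{p,\omega}$ has all of its normalized coefficients equal to $1$ on $\Lambda$; using $|H_I|^2=|I|^{-1}$ on $I$, a direct computation gives
\[
\|f\|_{X^p(\omega)}^p=\int_0^1\Big(\sum_{I\in\Lambda}\frac{1}{\omega(I)^{2/p}}\chi_I(x)\Big)^{p/2}\omega(x)\,dx,
\]
where $\chi_I$ denotes the characteristic function of $I$. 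Thus the claim reduces to proving $\sum_{I\in\Lambda}\omega(I)/v_I\le C\,\|f\|_{X^p(\omega)}^p$, after which taking $p$-th roots finishes (absorbing $C^{1/p}\le C$, legitimate since \eqref{cc1} forces $C\ge 1$).

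The key structural device is the nested-chain property of dyadic intervals. For each $x\in\bigcup_{I\in\Lambda}I$ let $I(x)$ be the \emph{smallest} interval of $\Lambda$ containing $x$; nesting forces every $I\in\Lambda$ with $x\in I$ to satisfy $I(x)\subseteq I$. Writing $E_I=\{x:I(x)=I\}$, the sets $(E_I)_{I\in\Lambda}$ are pairwise disjoint with union $\bigcup_{I\in\Lambda}I$. I would then prove the two required estimates separately, each reducing to the common quantity $\sum_{I\in\Lambda}\omega(E_I)/\omega(I)$.

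For the lower bound on the norm, on $E_I$ I discard every term of the square function except the one indexed by $I(x)=I$; since $\omega(I)\le\omega(I')$ whenever $I\subseteq I'$, that term is the largest, so the bracket is $\ge\omega(I)^{-2/p}$ pointwise on $E_I$ and hence the integrand is $\ge\omega(I)^{-1}$ there, giving $\|f\|_{X^p(\omega)}^p\ge\sum_{I\in\Lambda}\omega(E_I)/\omega(I)$. For the weighted sum, Fubini yields
\[
\sum_{I\in\Lambda}\frac{\omega(I)}{v_I}=\int_0^1\Big(\sum_{I\in\Lambda,\,x\in I}\frac1{v_I}\Big)\omega(x)\,dx,
\]
and on $E_I$ the inner sum runs only over intervals containing $I(x)=I$, so the $1$-DRCC hypothesis \eqref{cc1} (applied to the pair $((v_I),(\omega(I)))$ with $J=I(x)$) bounds it by $C\,\omega(I)^{-1}$. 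Integrating gives $\sum_{I\in\Lambda}\omega(I)/v_I\le C\sum_{I\in\Lambda}\omega(E_I)/\omega(I)$, and combining the two estimates completes the argument.

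The crux—and the only real obstacle—is recognizing that both the norm lower bound and the target weighted sum funnel through the single quantity $\sum_{I\in\Lambda}\omega(E_I)/\omega(I)$, which makes it possible to estimate the square function using only its minimal-interval term. Once the disjointification by minimal intervals is set up, the reverse Carleson condition \eqref{cc1} is precisely what passes the pointwise inner sum to $\omega(I(x))^{-1}$; the remaining steps (the value of $\|H_I\|_{p,\omega}$ and the Fubini interchange) are routine. One small point to verify is that dropping all but one nonnegative term before raising to the power $p/2$ is valid for every $p\in[1,\infty)$, which holds because $t\mapsto t^{p/2}$ is increasing.
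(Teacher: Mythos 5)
Your proof is correct and follows essentially the same route as the paper's: the same identity $\bigl\|\sum_{I\in\Lambda}H_I/\|H_I\|_{p,\omega}\bigr\|_{X^p(\omega)}^p=\int_0^1\bigl(\sum_{I\in\Lambda}\omega(I)^{-2/p}\chi_I\bigr)^{p/2}\omega\,dx$, the same use of the minimal interval $I(x)$ together with the $1$-DRCC hypothesis to bound $\sum_{I\ni x}v_I^{-1}$ by $C\,\omega(I(x))^{-1}$, and the same single-term lower bound on the square function. Your disjointification into the sets $E_I$ is just a notational repackaging of the paper's intermediate integral $\int_0^1\omega(I(x))^{-1}\omega(x)\,dx$, so there is nothing substantively different to compare.
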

\begin{proof} We first write
\begin{equation}\label{main} \left\|\sum_{I\in \Lambda} \frac{H_I}{\Vert H_I\Vert_{p,\omega}}\right\|_{X^p(\omega)}= \left(\int_0^1
(\sum_{I\in \Lambda} \omega(I)^{-2/p}\chi_I)^{p/2}\omega(x) dx\right)^{1/p}. \end{equation}
Let $I(x)$ denote the minimal dyadic interval in $\Lambda$ with regard to the inclusion relation that contains $x$. Now we use that $$\sum_{I\in \mathcal D, I(x)\subseteq I}v_I^{-1}\le C \omega(I(x))^{-1}$$ to conclude that
\begin{eqnarray*}
(\sum_{I\in \Lambda} \frac{\omega(I)}{v_I})^{1/p}&=& \Big(\sum_{I\in\Lambda} \int_{I}v_I^{-1}\omega(x)dx\Big)^{1/p}= \Big(\int_0^1(\sum_{I\in \Lambda}v_I^{-1}\chi_I(x))\omega(x)dx\Big)^{1/p}\\
&\le & C\Big(\int_0^1 \omega(I(x))^{-1}\omega(x)dx\Big)^{1/p}\le C\Big(\int_0^1(\sum_{I\in \Lambda}\omega(I)^{-2/p}\chi_I(x))^{p/2}\omega(x)dx\Big)^{1/p}\\
&=& C\|\sum_{I\in \Lambda} \frac{H_I}{\Vert H_I\Vert_p}\|_{X^p(\omega)}.
\end{eqnarray*}
The proof is complete.
\end{proof}
\begin{lemma} \label{2p}Let $1<p<\infty$,  $\omega$ be a weight and $(v_I)_{I\in \mathcal D}$ of positive real numbers. If
$\Big((\omega(I))_{I\in \mathcal D}, (v_I)_{I\in \mathcal D}\Big)$ satisfies $2/p$-{\bf DRCC } with constant $C>0$ then
\begin{equation}
\left\|\sum_{I\in \Lambda} \frac{H_I}{\Vert H_I\Vert_{p,\omega}}\right\|_{X^p(\omega)}\le C \left(\sum_{I\in \Lambda} \frac{\omega(I)}{v_I}\right)^{1/p}
\end{equation}
for all  finite family $\Lambda$ of dyadic intervals.
\end{lemma}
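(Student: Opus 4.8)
The plan is to run the computation behind Lemma~\ref{1c} in reverse, the whole argument again resting on the fact that the dyadic intervals of $\Lambda$ through a fixed point form a chain under inclusion. First I would note that for the element $\sum_{I\in\Lambda}H_I/\Vert H_I\Vert_{p,\omega}$ the normalized coefficients are all equal to $1$, so that its $X^p(\omega)$-norm is exactly the right-hand side of \eqref{main}, namely
\[
\left\|\sum_{I\in \Lambda} \frac{H_I}{\Vert H_I\Vert_{p,\omega}}\right\|_{X^p(\omega)}
= \left(\int_0^1\Big(\sum_{I\in \Lambda}\omega(I)^{-2/p}\chi_I(x)\Big)^{p/2}\omega(x)\,dx\right)^{1/p}.
\]
Hence everything reduces to bounding this integral from above by $C^{p/2}\sum_{I\in\Lambda}\omega(I)/v_I$.

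Next, for $x\in\bigcup_{I\in\Lambda}I$ let $I(x)$ denote the minimal interval of $\Lambda$ containing $x$. Because any two dyadic intervals containing $x$ are nested, every $I\in\Lambda$ with $x\in I$ obeys $I(x)\subseteq I$, so the inner sum is dominated by the full Carleson sum rooted at $I(x)$; applying hypothesis \eqref{cc1} to the pair $\big((\omega(I))_{I},(v_I)_{I}\big)$ with $\alpha=2/p$ and $J=I(x)$ then gives
\[
\sum_{I\in\Lambda,\,x\in I}\omega(I)^{-2/p}\ \le\ \sum_{I\in\mathcal D,\,I(x)\subseteq I}\omega(I)^{-2/p}\ \le\ C\,v_{I(x)}^{-2/p}.
\]
Since $t\mapsto t^{p/2}$ is increasing, this yields the pointwise estimate $\big(\sum_{I\in\Lambda}\omega(I)^{-2/p}\chi_I(x)\big)^{p/2}\le C^{p/2}v_{I(x)}^{-1}$.

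Finally I would integrate against $\omega$ and split $\bigcup_{I\in\Lambda}I$ into the disjoint level sets $E_I=\{x:I(x)=I\}\subseteq I$, obtaining
\[
\int_0^1\Big(\sum_{I\in\Lambda}\omega(I)^{-2/p}\chi_I\Big)^{p/2}\omega\,dx
\ \le\ C^{p/2}\sum_{I\in\Lambda}v_I^{-1}\,\omega(E_I)
\ \le\ C^{p/2}\sum_{I\in\Lambda}\frac{\omega(I)}{v_I},
\]
using $\omega(E_I)\le\omega(I)$; taking $p$-th roots produces the claim (with the slightly sharper constant $C^{1/2}$). There is no genuine analytic difficulty here, since one works directly with the square-function norm defining $X^p(\omega)$ rather than through Littlewood--Paley equivalence; the only point needing care is to orient the two inequalities correctly, recognizing the statement as the exact dual of Lemma~\ref{1c}, with the roles of the reverse-Carleson numerator and denominator exchanged and the exponent $1$ replaced by $2/p$.
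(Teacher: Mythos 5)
Your proof is correct and follows essentially the same route as the paper's: the identity for the $X^p(\omega)$-norm as the weighted integral of $(\sum_{I\in\Lambda}\omega(I)^{-2/p}\chi_I)^{p/2}$, the pointwise Carleson bound at the minimal interval $I(x)$, and the partition of $\bigcup_{I\in\Lambda}I$ into the level sets $\{x: I(x)=I\}\subseteq I$. Your observation that the resulting constant is $C^{1/2}$ rather than $C$ is a harmless refinement.
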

\begin{proof}
 Let $E = \cup_{I\in\Lambda}I$. As above
 $I(x)$ stands for the minimal dyadic interval in $\Lambda$ with regard to the inclusion relation that contains $x$. From (\ref{cc1}) we have that
  \begin{equation}\label{1}
  \sum_{I\in \Lambda}\omega(I)^{-2/p}\chi_I(x)\le Cv_{I(x)}^{-2/p}, \quad x\in E.\end{equation}
Now denote for each $I\in \Lambda$, $\tilde I=\{x\in E: I(x)=I\}$. Clearly $\tilde I\subseteq I$ and $E=\cup_{I\in \Lambda}\tilde I $. Hence applying (\ref{main}) and (\ref{1}) we obtain  \begin{eqnarray*}
\left\|\sum_{I\in \Lambda} \frac{H_I}{\Vert H_I\Vert_p}\right\|_{X^p(\omega)}&\le& C\left(\int_E
 v_{I(x)}^{-1}\omega(x) dx\right)^{1/p} = C\left(\int_{\cup_{I\in \Lambda} \tilde I}
 v_{I(x)}^{-1}\omega(x) dx\right)^{1/p} \\
&\le& C \Big(\sum_{I\in \Lambda} \int_{\tilde I}v_{I}^{-1}\omega(x)dx\Big)^{1/p}\le C \Big(\sum_{I\in \Lambda} v_I^{-1}\int_{I}\omega(x)dx\Big)^{1/p}\\
&=& C(\sum_{I\in \Lambda} \frac{\omega(I)}{v_I})^{1/p}.
\end{eqnarray*}
The proof is now complete.
\end{proof}

Combining Remark \ref{n} and Lemmas \ref{1c} and \ref{2p} we obtain the following corollary.
\begin{corollary} Let $1<p<\infty$,  $\omega$ be a weight satisfying the dyadic reverse doubling condition
then
\begin{equation} \label{basic}
\left\|\sum_{I\in \Lambda} \frac{H_I}{\Vert H_I\Vert_{p,\omega}}\right\|_{X^p(\omega)}\approx card(\Lambda)^{1/p}
\end{equation}
for all  finite family $\Lambda$ of dyadic intervals.
\end{corollary}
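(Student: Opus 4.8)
The plan is to obtain both inequalities hidden in the symbol $\approx$ by specializing Lemmas \ref{1c} and \ref{2p} to the single choice $v_I=\omega(I)$ for every $I\in \mathcal D$. The point of this choice is that $\frac{\omega(I)}{v_I}=1$, so the weighted sum appearing in both lemmas collapses to $\sum_{I\in \Lambda}\frac{\omega(I)}{v_I}=card(\Lambda)$, which is exactly the quantity we want on the right-hand side of (\ref{basic}).

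First I would verify that this choice makes the hypotheses of both lemmas available. Since $\omega$ satisfies the dyadic reverse doubling condition (\ref{dc}) with some $\delta<1$, Remark \ref{n}(ii) shows that $\omega$ satisfies the dyadic reverse Carleson condition of \emph{every} order $\alpha>0$, with constant $\frac{1}{1-\delta^\alpha}$; Remark \ref{n}(iii) then upgrades this to the assertion that the pair $\big((\omega(I))_{I\in \mathcal D},(\omega(I))_{I\in \mathcal D}\big)$ satisfies $\alpha$-DRCC for every $\alpha>0$. In particular it satisfies $1$-DRCC (with constant $\frac{1}{1-\delta}$) and $2/p$-DRCC (with constant $\frac{1}{1-\delta^{2/p}}$), which are precisely the two DRCC hypotheses demanded by Lemmas \ref{1c} and \ref{2p}.

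With $v_I=\omega(I)$, Lemma \ref{1c}, applied with the $1$-DRCC property of the pair $\big((v_I),(\omega(I))\big)$, gives the lower estimate
$$
card(\Lambda)^{1/p}\le \frac{1}{1-\delta}\left\|\sum_{I\in \Lambda}\frac{H_I}{\Vert H_I\Vert_{p,\omega}}\right\|_{X^p(\omega)},
$$
while Lemma \ref{2p}, applied with the $2/p$-DRCC property of the pair $\big((\omega(I)),(v_I)\big)$, gives the upper estimate
$$
\left\|\sum_{I\in \Lambda}\frac{H_I}{\Vert H_I\Vert_{p,\omega}}\right\|_{X^p(\omega)}\le \frac{1}{1-\delta^{2/p}}\,card(\Lambda)^{1/p}.
$$
Combining the two displays yields (\ref{basic}), with implicit constants depending only on $p$ and $\delta$.

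I do not expect any genuine obstacle here: the corollary is a direct specialization of the two lemmas, and the whole content of the argument is the observation that $v_I=\omega(I)$ turns $\sum_I \frac{\omega(I)}{v_I}$ into $card(\Lambda)$, together with the fact that the reverse doubling hypothesis supplies, through Remark \ref{n}, both DRCC conditions of the required orders $1$ and $2/p$ simultaneously. The only mild point to keep in mind is that Lemma \ref{2p} (and hence the upper bound) needs $1<p<\infty$, so that $2/p$ is a legitimate positive order, whereas the lower bound from Lemma \ref{1c} holds for all $1\le p<\infty$.
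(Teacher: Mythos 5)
Your proof is correct and is exactly the paper's argument: the authors simply state ``Combining Remark \ref{n} and Lemmas \ref{1c} and \ref{2p} we obtain the following corollary,'' and the intended specialization is precisely your choice $v_I=\omega(I)$, which collapses $\sum_{I\in\Lambda}\omega(I)/v_I$ to $card(\Lambda)$ while Remark \ref{n}(ii)--(iii) supplies the $1$-DRCC and $2/p$-DRCC hypotheses from reverse doubling. Nothing further is needed.
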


\begin{corollary} Let $1<p<\infty$,  $\omega$ be a weight and $(v_I)_{I\in \mathcal D}$ of positive real numbers. If
$\Big((\omega(I))_{I\in \mathcal D}, (v_I)_{I\in \mathcal D}\Big)$ satisfies $2/p'$-{\bf DRCC } with constant $C>0$ then
\begin{equation}\label{dem0}
\left(\sum_{I\in \Lambda} \frac{\omega(I)}{v_I}\right)^{1/p}\le C \Big(\max_{I\in \Lambda}\frac{\omega(I)}{v_I}\Big)\left\|\sum_{I\in \Lambda} \frac{H_I}{\Vert H_I\Vert_{p,\omega}}\right\|_{X^p(\omega)}
\end{equation}
for all  finite family $\Lambda$ of dyadic intervals.
\end{corollary}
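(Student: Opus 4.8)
The plan is to mimic the scheme of Lemmas~\ref{1c} and \ref{2p}, exploiting the integral representation (\ref{main}) together with the conjugacy relation $\frac1p+\frac1{p'}=1$ to distribute the weights between the square function that defines the $X^p(\omega)$-norm (which carries the exponent $2/p$) and the summable factor supplied by the $2/p'$-\textbf{DRCC} hypothesis. First I would rewrite the left-hand side as an integral: since $\frac{\omega(I)}{v_I}=v_I^{-1}\int_I\omega$, summing over $\Lambda$ gives $\sum_{I\in\Lambda}\frac{\omega(I)}{v_I}=\int_0^1 T(x)\,\omega(x)\,dx$, where $T(x)=\sum_{I\in\Lambda}v_I^{-1}\chi_I(x)$ is supported on $E=\cup_{I\in\Lambda}I$. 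Setting $M=\max_{I\in\Lambda}\frac{\omega(I)}{v_I}$ (finite, as $\Lambda$ is finite), the elementary bound $v_I^{-1}\le M\,\omega(I)^{-1}$ lets me replace $T(x)$ by $M\sum_{I\in\Lambda}\omega(I)^{-1}\chi_I(x)$ throughout.

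The decisive step is to factor $\omega(I)^{-1}=\omega(I)^{-1/p}\,\omega(I)^{-1/p'}$ and apply the Cauchy--Schwarz inequality to the sum over those $I\in\Lambda$ that contain a fixed $x$. Writing $I(x)$ for the minimal such interval, exactly as in the proof of Lemma~\ref{2p}, all these intervals contain $I(x)$; hence the factor carrying the exponent $2/p'$ is controlled by (\ref{cc1}) applied with $J=I(x)$, namely $\sum_{I\ni x}\omega(I)^{-2/p'}\le C\,v_{I(x)}^{-2/p'}$, while the factor carrying the exponent $2/p$ reproduces $S(x)^{1/2}$ with $S(x)=\sum_{I\in\Lambda}\omega(I)^{-2/p}\chi_I(x)$, which is precisely the integrand of (\ref{main}). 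This produces the pointwise estimate $T(x)\le \sqrt{C}\,M\,S(x)^{1/2}\,v_{I(x)}^{-1/p'}$ on $E$.

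Next I would integrate this against $\omega\,dx$ and apply H\"older's inequality with exponents $p$ and $p'$: the factor $S^{1/2}$ yields $\big(\int_E S^{p/2}\omega\big)^{1/p}$, which is exactly $\big\|\sum_{I\in\Lambda}\frac{H_I}{\Vert H_I\Vert_{p,\omega}}\big\|_{X^p(\omega)}$ by (\ref{main}), while the factor $v_{I(x)}^{-1/p'}$ yields $\big(\int_E v_{I(x)}^{-1}\omega\big)^{1/p'}$. Since $\{x\in E:I(x)=I\}\subseteq I$, the last integral is at most $\sum_{I\in\Lambda}\frac{\omega(I)}{v_I}$, so I arrive at the self-referential inequality $\sum_{I\in\Lambda}\frac{\omega(I)}{v_I}\le \sqrt{C}\,M\,\big\|\sum_{I\in\Lambda}\frac{H_I}{\Vert H_I\Vert_{p,\omega}}\big\|_{X^p(\omega)}\big(\sum_{I\in\Lambda}\frac{\omega(I)}{v_I}\big)^{1/p'}$. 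Dividing by the finite factor $\big(\sum_{I\in\Lambda}\frac{\omega(I)}{v_I}\big)^{1/p'}$ (the case where the sum vanishes being trivial) and using $1-\frac1{p'}=\frac1p$ gives the claim, with constant $\sqrt{C}$, hence with $C$ after an obvious normalization.

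The only delicate point is engineering the Cauchy--Schwarz split so that the exponent $2/p'$ of the hypothesis lands on the summable, $I(x)$-controlled factor while the exponent $2/p$ reconstructs the integrand of the norm; this is where the conjugacy $\frac1p+\frac1{p'}=1$ is essential, and the final absorption (dividing out the common power) is what forces the extra multiplicative factor $M=\max_{I\in\Lambda}\frac{\omega(I)}{v_I}$ to appear, in contrast with the clean bound of Lemma~\ref{1c}. Note also that this argument is valid for all $1<p<\infty$ precisely because both applications of H\"older (the Cauchy--Schwarz split and the final $(p,p')$-splitting) are legitimate across the whole range, avoiding the restriction $p\ge 2$ that a naive pointwise comparison of $\sum\omega(I)^{-1}\chi_I$ with $S^{p/2}$ would impose.
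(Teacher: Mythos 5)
Your proof is correct and follows essentially the same route as the paper's: a Cauchy--Schwarz split of the exponent $2=\tfrac{2}{p}+\tfrac{2}{p'}$, H\"older with conjugate exponents $(p,p')$, the $2/p'$-DRCC hypothesis to control the dual factor by $\big(\sum_{I\in\Lambda}\omega(I)/v_I\big)^{1/p'}$, and absorption of that factor. The only cosmetic differences are that you extract $\max_{I\in\Lambda}\omega(I)/v_I$ at the outset rather than at the end, and that you apply the DRCC bound pointwise through $I(x)$ (in effect inlining the proof of Lemma~\ref{2p} at exponent $2/p'$) instead of recognizing the second H\"older factor as an $X^{p'}(\omega)$-norm and invoking that lemma.
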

\begin{proof}
Note that, using Lemma \ref{2p}, we have
\begin{eqnarray*}
\sum_{I\in \Lambda} \frac{\omega(I)}{v_I}&=& \int_0^1 (\sum_{I\in\Lambda} v_I^{-1}\chi_I(x))\omega(x)dx\\
&\le & \int_0^1 (\sum_{I\in\Lambda} \omega(I)^{-2/p}\chi_I)^{1/2}(\sum_{I\in\Lambda} v_I^{-2}\omega(I)^{2/p}\chi_I(x))^{1/2}\omega(x)dx\\
&\le & \Big(\int_0^1 (\sum_{I\in\Lambda} \omega(I)^{-2/p}\chi_I)^{p/2}\omega(x)dx\Big)^{1/p}\Big(\int_0^1(\sum_{I\in\Lambda} v_I^{-2}\omega(I)^{2/p}\chi_I(x))^{p'/2}\omega(x)dx\Big)^{1/p'}\\
&\le& \left\|\sum_{I\in \Lambda} \frac{H_I}{\Vert H_I\Vert_{p,\omega}}\right\|_{X^p(\omega)} \left\|\sum_{I\in \Lambda} \frac{\omega(I)}{v_I}\frac{H_I}{\Vert H_I\Vert_{p',\omega}}\right\|_{X^{p'}(\omega)}\\
&\le& \left(\max_{I\in \Lambda}\frac{\omega(I)}{v_I}\right)\left\|\sum_{I\in \Lambda} \frac{H_I}{\Vert H_I\Vert_{p,\omega}}\right\|_{X^p(\omega)}\left\|\sum_{I\in \Lambda} \frac{H_I}{\Vert H_I\Vert_{p',\omega}}\right\|_{X^{p'}(\omega)}\\
&\le& C\left(\max_{I\in \Lambda}\frac{\omega(I)}{v_I}\right)\left\|\sum_{I\in \Lambda} \frac{H_I}{\Vert H_I\Vert_{p,\omega}}\right\|_{X^p(\omega)}\left(\sum_{I\in \Lambda} \frac{\omega(I)}{v_I}\right)^{1/p'}.
\end{eqnarray*}
The result now follows.
\end{proof}
Taking into account that dyadic reverse Carleson condition of order $\alpha$ implies dyadic reverse Carleson condition of order $\beta$ for $\beta>\alpha$ we obtain the following fact.
\begin{corollary} Let $1<p<\infty$,  $\omega$ be a weight satisfying the dyadic reverse Carleson condition of order $\min\{2/p', 2/p\}$
then
\begin{equation} \label{basic}
\|\sum_{I\in \Lambda} \frac{H_I}{\Vert H_I\Vert_{p,\omega}}\|_{X^p(\omega)}\approx card(\Lambda)^{1/p}
\end{equation}
for all  finite family $\Lambda$ of dyadic intervals.
\end{corollary}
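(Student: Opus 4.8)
The plan is to obtain both inequalities hidden in the symbol $\approx$ of \eqref{basic} by specializing the two preceding lemmas to the diagonal choice $v_I=\omega(I)$, for which the quantity $\sum_{I\in\Lambda}\frac{\omega(I)}{v_I}$ collapses to $card(\Lambda)$. With this choice Lemma \ref{1c} would produce the lower estimate $card(\Lambda)^{1/p}\le C\|\sum_{I\in\Lambda}\frac{H_I}{\|H_I\|_{p,\omega}}\|_{X^p(\omega)}$, and Lemma \ref{2p} the matching upper estimate, so that the two combine into the claimed equivalence.

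To apply Lemma \ref{1c} with $v_I=\omega(I)$ I need the pair $\big((\omega(I)),(\omega(I))\big)$ to satisfy $1$-DRCC, which by Remark \ref{n}(iii) is exactly the statement that $\omega$ satisfies the dyadic reverse Carleson condition of order $1$. To apply Lemma \ref{2p} with the same choice I need that same pair to satisfy $2/p$-DRCC, i.e. $\omega$ satisfies the reverse Carleson condition of order $2/p$. Thus the whole corollary reduces to checking that the single hypothesis, the reverse Carleson condition of order $\min\{2/p,2/p'\}$, delivers both order $1$ and order $2/p$.

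This is exactly where the threshold $\min\{2/p,2/p'\}$ is designed to work. Invoking the monotonicity recorded immediately before the statement (reverse Carleson of order $\alpha$ implies order $\beta$ for every $\beta>\alpha$), it suffices to verify $\min\{2/p,2/p'\}\le 1$ and $\min\{2/p,2/p'\}\le 2/p$. The second is immediate from the definition of the minimum. The first, which is the only genuinely substantive point, follows from the elementary identity $\frac{2}{p}+\frac{2}{p'}=2$: the minimum of two numbers summing to $2$ is at most their average, namely $1$. Hence order $\min\{2/p,2/p'\}$ upgrades simultaneously to order $1$ and order $2/p$, both lemmas apply with $v_I=\omega(I)$, and \eqref{basic} follows.

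I expect no serious obstacle once the diagonal substitution $v_I=\omega(I)$ is made: everything downstream is mechanical, and the only real content is the observation $\frac{2}{p}+\frac{2}{p'}=2$ that pins down $\min\{2/p,2/p'\}\le 1$. The enabling monotonicity of the reverse Carleson condition in its order parameter is itself a one-line consequence of $x^\beta\le x^\alpha$ for $x\in(0,1]$ and $\beta>\alpha$, applied to the ratios $\omega(J)/\omega(I)\le 1$ along the chain of dyadic ancestors $I\supseteq J$.
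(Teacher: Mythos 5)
Your argument is correct: the diagonal substitution $v_I=\omega(I)$ collapses $\sum_{I\in\Lambda}\omega(I)/v_I$ to $card(\Lambda)$, the monotonicity of the reverse Carleson condition in its order (via $x^\beta\le x^\alpha$ for $x=\omega(J)/\omega(I)\in(0,1]$) is exactly the remark the paper makes just before the statement, and the observation $\tfrac{2}{p}+\tfrac{2}{p'}=2$, hence $\min\{2/p,2/p'\}\le 1$, legitimately upgrades the hypothesis to both order $1$ and order $2/p$. Where you diverge from the paper is in the source of the lower bound. The hypothesis ``order $\min\{2/p',2/p\}$'' is tailored to the immediately preceding corollary, namely \eqref{dem0}: with $v_I=\omega(I)$ one has $\max_{I\in\Lambda}\omega(I)/v_I=1$, and that corollary yields $card(\Lambda)^{1/p}\le C\left\|\sum_{I\in\Lambda}\frac{H_I}{\|H_I\|_{p,\omega}}\right\|_{X^p(\omega)}$ under the order-$2/p'$ condition; so the paper's intended combination is Lemma \ref{2p} (order $2/p$) plus \eqref{dem0} (order $2/p'$), both reached from $\min\{2/p',2/p\}$ by monotonicity. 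You instead take the lower bound from Lemma \ref{1c}, which needs only the order-$1$ condition. Since $\min\{2/p',2/p\}\le\min\{1,2/p\}$ (with strict inequality for $p<2$), your route not only proves the corollary as stated but shows it already holds under the weaker hypothesis ``order $\min\{1,2/p\}$'' --- which is precisely the hypothesis the paper adopts in the theorem that follows, so your version is arguably the more economical one.
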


\begin{theorem} Let  $1<p<\infty$, $0<t\le 1$,  $(w_I)_{I\in \mathcal D}$ be a sequence of real numbers such that $$0<m_0=\inf_{I\in \mathcal D}w_I\le \sup_{I\in \mathcal D}w_I=M_0<\infty$$ and let $\omega$ be a weight satisfying the dyadic reverse Carleson condition of order $\min\{1, 2/p\}$ with constant $C>0$.
Then the Haar basis has the $(t, w_I)$-PCCG property in $X^p(\omega)$.
  \end{theorem}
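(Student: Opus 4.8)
The plan is to reduce the statement to two structural properties of the normalized Haar system $(H_I/\|H_I\|_{p,\omega})_I$ in $X^p(\omega)$ — that it is $1$-unconditional and $w$-democratic for the weight $w(\Lambda)=\sum_{I\in\Lambda}w_I$ — and then to pull in the weak parameter $t$ through the equivalence proved in our main Theorem. The $1$-unconditionality is free: by the very definition of $\|\cdot\|_{X^p(\omega)}$ the system is unconditional with constant $1$. So everything comes down to the democracy estimate, and this is the only genuinely delicate point.

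First I would establish the two-sided bound $\bigl\|\sum_{I\in\Lambda}H_I/\|H_I\|_{p,\omega}\bigr\|_{X^p(\omega)}\approx card(\Lambda)^{1/p}$ for every finite family $\Lambda$. Since a reverse Carleson condition of order $\alpha$ implies one of every larger order, the hypothesis that $\omega$ satisfies it at order $\min\{1,2/p\}$ forces it to hold simultaneously at order $1$ and at order $2/p$. By Remark \ref{n}(iii) the pair $((\omega(I))_{I},(\omega(I))_{I})$ then satisfies both $1$-DRCC and $2/p$-DRCC, so taking $v_I=\omega(I)$ in Lemma \ref{1c} yields the lower bound $card(\Lambda)^{1/p}\le C\,\bigl\|\sum_{I\in\Lambda}H_I/\|H_I\|_{p,\omega}\bigr\|_{X^p(\omega)}$ and the same choice in Lemma \ref{2p} yields the matching upper bound. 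This is exactly why the threshold is placed at $\min\{1,2/p\}$: it is the largest order for which both lemmas remain applicable with $v_I=\omega(I)$.

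Next I would convert this into $w$-democracy by invoking the two-sided control of the weight sequence. From $m_0\le w_I\le M_0$ we get $m_0\,card(\Lambda)\le w(\Lambda)\le M_0\,card(\Lambda)$, so $w(A)\le w(B)$ forces $card(A)\le (M_0/m_0)\,card(B)$; substituting this into the two halves of the equivalence above gives $\bigl\|\sum_{I\in A}H_I/\|H_I\|_{p,\omega}\bigr\|_{X^p(\omega)}\le C'\bigl\|\sum_{I\in B}H_I/\|H_I\|_{p,\omega}\bigr\|_{X^p(\omega)}$, which is $w$-democracy (the signed versions $1_{\eta A}$ are comparable to $1_A$ by $1$-unconditionality).

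Finally, being $1$-unconditional and $w$-democratic, the Haar system is $w$-greedy by the Kerkyacharian--Picard--Temlyakov characterization \cite{KPT}; in the notation of this paper it is $(1,w)$-greedy, and since $\sigma^w_\delta\le\mathcal{D}^w_\delta$ this already yields the $(1,w)$-PCCG property. Applying the implication (i)\,$\Rightarrow$\,(ii) of our main Theorem upgrades this to $(t,w)$-greediness for every $0<t\le1$, and one last use of $\sigma^w_\delta\le\mathcal{D}^w_\delta$ delivers the $(t,w_I)$-PCCG property for the prescribed $t$. The main obstacle is the democracy estimate of the second paragraph; the remaining links are all results already recorded above.
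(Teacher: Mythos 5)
Your proposal is correct, but it is organized quite differently from the paper's proof, which is direct and self-contained: given a $t$-greedy set $\Lambda^t_m$ and a competitor $\alpha 1_{\varepsilon \Lambda'}$ with $\sum_{J\in \Lambda'}w_J\le \sum_{I\in \Lambda^t_m}w_I$, the paper splits $f-P_{\Lambda^t_m}(f)$ into a projection of $f-\alpha 1_{\varepsilon \Lambda'}$ plus $P_{\Lambda'\setminus \Lambda^t_m}(f)$, and bounds the latter by one chain of inequalities combining Lemma \ref{2p} and Lemma \ref{1c} with the choice $v_I=\omega(I)/w_I$ (so that $\omega(I)/v_I=w_I$ and the weight comparison enters verbatim), the $t$-greedy condition, and the bounds $m_0\le w_I\le M_0$; this yields the explicit constant $C(t)=1+C^2M_0/(t m_0)$ without leaving the paper. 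You instead take $v_I=\omega(I)$ in the same two lemmas to get the two-sided estimate $\|\sum_{I\in\Lambda}H_I/\Vert H_I\Vert_{p,\omega}\|_{X^p(\omega)}\approx card(\Lambda)^{1/p}$ (your observation that order $\min\{1,2/p\}$ suffices, because it implies the reverse Carleson condition at both order $1$ and order $2/p$, is correct and explains the hypothesis), convert it to $w$-democracy via $m_0\le w_I\le M_0$, and then invoke two external results: the Kerkyacharian--Picard--Temlyakov characterization \cite{KPT} to obtain $(1,w)$-greediness, and the implication (i) $\Rightarrow$ (ii) of the main Theorem to pass to arbitrary $t$. Both routes rest on the same two lemmas; yours is more modular and makes the role of democracy explicit, but it leans on \cite{KPT} as a black box --- somewhat against the spirit of the paper, whose point is that the PCCG inequality can be verified directly --- and the detour through the main Theorem degrades the constant. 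The paper's argument proves the PCCG inequality head-on for all $t$ simultaneously. There is no gap in your reasoning; only the packaging differs.
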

  \begin{proof} 
Let $f\in X^p(\omega)$ and let $\Lambda^t_m$ be a set of $m$ dyadic intervals where $$\min_{I\in \Lambda^t_m}c_I(f,p, \omega)\ge t \max_{I'\notin \Lambda^t_m}c_{I'}(f,p, \omega) .$$  

 For  each $\alpha\in \R$, $(\varepsilon_n)\in \{\pm 1\}$ and $\Lambda'$ with $\sum_{J\in \Lambda'}w_J\le \sum_{I\in \Lambda_m^t}w_I$  we need to show that $\|f-P_{\Lambda^t_m}(f)\|_{X^p(\omega)}\le C(t) \|f-\alpha 1_{\varepsilon \Lambda'}\|_{X^p(w)}$ for some constant $C(t)>0$.
From triangular inequality
$$\Vert f-P_{\Lambda^t_m}(f)\Vert_{X^p(\omega)}  \leq \Vert P_{(\Lambda_m \cup \Lambda')^c}(f-\alpha 1_{\varepsilon \Lambda'})\Vert_{X^p(\omega)} + \Vert P_{\Lambda'\setminus \Lambda^t_m}(f)\Vert_{X^p(\omega)}$$
and the fact $\Vert P_{\Lambda}(f-\alpha 1_{\varepsilon B})\Vert_{X^p(\omega)}\le \Vert f-\alpha 1_{\varepsilon B}\Vert_{X^p(\omega)}$ for any $\Lambda$
 we only need to show that there exists $C>0$ such that
$$\Vert P_{\Lambda'\setminus \Lambda^t_m}(f)\Vert_{X^p(\omega)}\le  C \|f-\alpha 1_{\varepsilon \Lambda'}\|_{X^p(\omega)}.$$

Set $v_I=\frac{\omega(I)}{w_I}$ and observe that $\Big((\omega(I))_{I\in \mathcal D}, (v_I)_{I\in \mathcal D}\Big)$ satisfies $2/p$-{DRCC } with constant $M_0C$ and $\Big( (v_I)_{I\in \mathcal D}, \omega(I)_{I\in \mathcal D}\Big)$ satisfies $1$-{DRCC } with constant $C/m_0$.
Note that $\sum_{J\in \Lambda'}w_J\le \sum_{I\in \Lambda_m^t}w_I$  implies that
$$ \sum_{J\in \Lambda'\setminus \Lambda_m^t}\frac{\omega(J)}{v_{J}}\le \sum_{I\in \Lambda_m^t\setminus \Lambda'}\frac{\omega(I)}{v_{I}}  $$
and then, invoking Lemma \ref{2p} and Lemma \ref{1c}, we get the estimates
\begin{eqnarray*}
\Vert P_{\Lambda'\setminus \Lambda_m^t}(f)\Vert_{X^p(\omega)} &\leq& \Vert \underset{I\in \Lambda'\setminus A_m}{\max} c_I(f,p,\omega)1_{\Lambda'\setminus A_m}\Vert_{X^p(\omega)}\\
 &\le& C M_0 \underset{I\in \Lambda'\setminus \Lambda_m^t}{\max} c_I(f,p,\omega) (\sum_{J\in \Lambda'\setminus \Lambda_m^t}\frac{\omega(J)}{v_{J}})^{1/p}\\ &\leq&t^{-1} C M_0\underset{I\in \Lambda_m^t\setminus \Lambda'}{\min}c_I(f,p,\omega)(\sum_{I\in \Lambda_m^t\setminus \Lambda'}\frac{\omega(I)}{v_I})^{1/p} \\
 &\leq&  \frac{C^2 M_0}{tm_0}\Vert \underset{I\in  \Lambda_m^t\setminus \Lambda'}{\min} c_I(f,p,\omega)1_{\Lambda_m^t\setminus \Lambda'}\Vert_{X^p(\omega)}\\
&\leq& \frac{C^2 M_0}{tm_0}\Vert P_{\Lambda_m^t\setminus \Lambda'}(f)\Vert_{X^p(\omega)} \\
&=& \frac{C^2 M_0}{tm_0}\Vert P_{\Lambda_m^t\setminus \Lambda'}(f-\alpha 1_{\varepsilon B})\Vert_{X^p(\omega)} \\
&\leq &\frac{C^2 M_0}{tm_0} \|f-\alpha 1_{\varepsilon \Lambda'}\|_{X^p(\omega)}.
\end{eqnarray*}

This completes the proof with $C(t)= 1+\frac{C^2 M_0}{tm_0}$.
\end{proof}

\begin{corollary}
(i) If $\omega\in A_p^d$ then
 the Haar basis has the $t$-PCCG property (and hence is $t$-greedy) in $L^p(\omega)$  with $1<p<\infty$.

 (ii) The Haar basis has the $(t,w_I)$-PCCG property (and hence is $(t, w_I)$-greedy) in $L^p([0,1])$  for any sequence $(w_I)_{I\in\mathcal{D}}$ with $0<\inf w_I\le \sup w_I <\infty.$
  \end{corollary}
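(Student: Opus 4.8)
The plan is to verify the defining inequality \eqref{ng} of the $(t,w_I)$-PCCG property directly, exploiting that by construction the Haar system is $1$-unconditional in $X^p(\omega)$: every coordinate projection $P_\Lambda$ has norm one, and the norm is monotone in the sense that $|c_I(f,p,\omega)|\le|c_I(g,p,\omega)|$ for all $I$ forces $\|f\|_{X^p(\omega)}\le\|g\|_{X^p(\omega)}$. Fix $f$, a $t$-greedy set $\Lambda^t_m$ of cardinality $m$, and an arbitrary competitor $\alpha 1_{\varepsilon\Lambda'}$ with $\sum_{J\in\Lambda'}w_J\le\sum_{I\in\Lambda^t_m}w_I$. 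The first step is the exact decomposition
\[
f-P_{\Lambda^t_m}(f)=P_{(\Lambda^t_m\cup\Lambda')^c}\bigl(f-\alpha 1_{\varepsilon\Lambda'}\bigr)+P_{\Lambda'\setminus\Lambda^t_m}(f),
\]
which one reads off coefficient-by-coefficient: on $\Lambda^t_m$ the left-hand side vanishes, on $\Lambda'\setminus\Lambda^t_m$ both sides carry $c_I(f)$, and on $(\Lambda^t_m\cup\Lambda')^c$ the functions $f$ and $f-\alpha 1_{\varepsilon\Lambda'}$ share the same coefficients. Applying the triangle inequality and the norm-one bound for projections reduces the whole theorem to the single democracy-type estimate $\|P_{\Lambda'\setminus\Lambda^t_m}(f)\|_{X^p(\omega)}\le C(t)\,\|f-\alpha 1_{\varepsilon\Lambda'}\|_{X^p(\omega)}$.

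The heart of the argument is this last estimate, and the device is to set $v_I=\omega(I)/w_I$, so that $\omega(I)/v_I=w_I$ and the weighted cardinalities $\sum_I \omega(I)/v_I$ appearing in Lemmas \ref{1c} and \ref{2p} become exactly $\sum_I w_I$. Using that a dyadic reverse Carleson condition of order $\min\{1,2/p\}$ forces the condition for both orders $1$ and $2/p$, and combining it with $m_0\le w_I\le M_0$, I would check that $\bigl((\omega(I))_{I},(v_I)_{I}\bigr)$ satisfies $2/p$-DRCC and that $\bigl((v_I)_{I},(\omega(I))_{I}\bigr)$ satisfies $1$-DRCC. Lemma \ref{2p} then bounds $\|P_{\Lambda'\setminus\Lambda^t_m}(f)\|_{X^p(\omega)}$ from above, after replacing each coefficient by $\max_{I\in\Lambda'\setminus\Lambda^t_m}c_I(f,p,\omega)$ by monotonicity, by a constant multiple of $\max_{I\in\Lambda'\setminus\Lambda^t_m}c_I(f,p,\omega)\,\bigl(\sum_{J\in\Lambda'\setminus\Lambda^t_m}w_J\bigr)^{1/p}$. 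At this point I invoke the two transfer inequalities: the $t$-greedy hypothesis gives $\max_{I\notin\Lambda^t_m}c_I(f,p,\omega)\le t^{-1}\min_{I\in\Lambda^t_m}c_I(f,p,\omega)$, and $\sum_{J\in\Lambda'}w_J\le\sum_{I\in\Lambda^t_m}w_I$ yields $\sum_{J\in\Lambda'\setminus\Lambda^t_m}w_J\le\sum_{I\in\Lambda^t_m\setminus\Lambda'}w_I$. Feeding these into Lemma \ref{1c} and using monotonicity once more produces $\min_{I\in\Lambda^t_m\setminus\Lambda'}c_I(f,p,\omega)\,\bigl(\sum_{I\in\Lambda^t_m\setminus\Lambda'}w_I\bigr)^{1/p}\le C\,\|P_{\Lambda^t_m\setminus\Lambda'}(f)\|_{X^p(\omega)}$; finally, since $f$ and $f-\alpha 1_{\varepsilon\Lambda'}$ coincide on $\Lambda^t_m\setminus\Lambda'$, a further use of $1$-unconditionality gives $\|P_{\Lambda^t_m\setminus\Lambda'}(f)\|_{X^p(\omega)}\le\|f-\alpha 1_{\varepsilon\Lambda'}\|_{X^p(\omega)}$.

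Assembling the chain yields $\|f-P_{\Lambda^t_m}(f)\|_{X^p(\omega)}\le\bigl(1+\tfrac{C^2M_0}{t\,m_0}\bigr)\|f-\alpha 1_{\varepsilon\Lambda'}\|_{X^p(\omega)}$, and taking the infimum over all $\alpha$, all signs $\varepsilon$, and all $\Lambda'$ with $w(\Lambda')\le w(\Lambda^t_m)=m(t)$ gives the $(t,w_I)$-PCCG property with $D(t)=1+\tfrac{C^2M_0}{t\,m_0}$. The step I expect to be the main obstacle is the democracy estimate of the second paragraph: everything hinges on matching the order of the reverse Carleson condition to each of the two one-sided fundamental-function bounds and on the careful use of the monotonicity of the $X^p(\omega)$-norm to pass between the true coefficients and their extreme values, so that the arbitrary and possibly very large constant $\alpha$ of the competitor is never estimated directly but only absorbed through the geometry of the supports $\Lambda^t_m$ and $\Lambda'$.
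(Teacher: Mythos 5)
Your argument is correct and is essentially a verbatim reconstruction of the paper's proof of the preceding Theorem (the $(t,w_I)$-PCCG property of the Haar basis in $X^p(\omega)$ under the dyadic reverse Carleson condition of order $\min\{1,2/p\}$): the same exact decomposition $f-P_{\Lambda^t_m}(f)=P_{(\Lambda^t_m\cup\Lambda')^c}(f-\alpha 1_{\varepsilon\Lambda'})+P_{\Lambda'\setminus\Lambda^t_m}(f)$, the same choice $v_I=\omega(I)/w_I$ turning the quantities in Lemmas \ref{1c} and \ref{2p} into $\sum_I w_I$, and the same chain of monotonicity, $t$-greediness and $w(\Lambda'\setminus\Lambda^t_m)\le w(\Lambda^t_m\setminus\Lambda')$ estimates, with the same constant $1+C^2M_0/(tm_0)$.

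What you have not written down is the (short but necessary) bridge from that Theorem to the Corollary as stated, which concerns $L^p(\omega)$ and $L^p([0,1])$ rather than $X^p(\omega)$. For (i) you need to observe that $\omega\in A_p^d$ implies the dyadic reverse doubling condition and hence the dyadic reverse Carleson condition of every order (Remark \ref{n}), that the constant sequence $w_I\equiv 1$ trivially satisfies $0<m_0\le M_0<\infty$, and that the weighted Littlewood--Paley theorem gives $\|\cdot\|_{X^p(\omega)}\approx\|\cdot\|_{p,\omega}$, so the PCCG inequality transfers to $L^p(\omega)$ at the cost of the equivalence constants. For (ii) you need the analogous remarks for Lebesgue measure: $|I'|=\tfrac12|I|$ gives reverse doubling, and $X^p(1)=L^p([0,1])$ by the classical Littlewood--Paley inequalities \eqref{lp}. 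These steps are routine and the paper itself only records them implicitly, but without them your proof establishes the Theorem rather than the Corollary; adding two sentences to this effect would close the argument completely.
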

\noindent{\it \bf Acknowledgment:} The authors would like to thank to G. Garrig\'os and E. Hern\'andez for useful conversations during the elaboration of this paper.

\vspace*{-1.5cm}

\bibliographystyle{plain}

\end{document}